\newcommand{\lambdacsup}{\ensuremath \bar \lambda_C}
\newcommand{\lambdacinf}{\ensuremath \ubar \lambda_C}
\newcommand{\gammacsup}{\ensuremath \bar \gamma_C}
\newcommand{\lambdak}{\ensuremath {\lambda_K}}
\newcommand{\lambdaksup}{\ensuremath \bar \lambda_K}
\newcommand{\lambdakinf}{\ensuremath \ubar \lambda_K}
\newcommand{\gammak}{\ensuremath {\gamma_K}}
\newcommand{\gammaksup}{\ensuremath {\bar \gamma_K}}
\newcommand{\gammakinf}{\ensuremath {\ubar \gamma_K}}
\newcommand{\bp}{{\scalebox{0.6} {\ensuremath \boxplus}}}
\newcommand{\bc}{\ensuremath \oplus}
\begin{document}

\begin{center}

	{\bf{\LARGE{Reverse Euclidean and Gaussian isoperimetric\\ \vspace{2.5mm} inequalities for parallel sets
	  with applications}}}

	\vspace*{.25in}

 	\begin{tabular}{cc}
 		 & {\large{Varun Jog}} \\
 		& {\large{\texttt{vjog@wisc.edu}}} 
 			\end{tabular}
 \begin{center}
 Department of Electrical \& Computer Engineering\\
 University of Wisconsin-Madison
  \end{center}

	\vspace*{.2in}

August 2020

	\vspace*{.2in}

\end{center}


\begin{abstract}
The $r$-parallel set of a measurable set $A \subseteq \real^d$ is the set of all points whose distance from $A$ is at most $r$. In this paper, we show that the surface area of an $r$-parallel set in $\real^d$ with volume at most $V$ is upper-bounded by $e^{\Theta(d)}V/r$, whereas its Gaussian surface area is upper-bounded by $\max(e^{\Theta(d)}, e^{\Theta(d)}/r)$. We also derive a reverse form of the Brunn-Minkowski inequality for $r$-parallel sets, and as an aside a reverse entropy power inequality for Gaussian-smoothed random variables. We apply our results to two problems in theoretical machine learning: (1) bounding the computational complexity of learning $r$-parallel sets under a Gaussian distribution; and (2) bounding the sample complexity of estimating robust risk, which is a notion of risk in the adversarial machine learning literature that is analogous to the Bayes risk in hypothesis testing.\end{abstract} 

\section{Introduction}

The isoperimetric problem in $\real^d$ poses the following question: What is the minimum surface area of a set in $\real^n$ with a given volume? Equivalently, what is the maximum volume of a set in $\real^n$ with a given surface area? It is well known that Euclidean balls are the unique extremal sets for both formulations; i.e., the following inequality holds for all sets with surface area $S$ and volume $V$, with equality if and only if the set is a Euclidean ball:
\begin{align*}
S^d \geq d^d \omega_d V^{d-1},
\end{align*}
where $\omega_d$ is the volume of the unit $\ell_2$-ball in $\real^d$. Although intuitive, this inequality is non-trivial to prove and holds in astonishing generality. Indeed, with the right definition of ``surface area,'' the isoperimetric inequality holds for all measurable sets, with no regularity conditions on the boundary~\cite{Fed14}. The volume of a measurable set $A \subseteq \real^n$ is its Lebesgue measure $\lambda(A)$. In this paper, we use the notion of \emph{Minkowski surface area}, defined as follows:

\begin{definition}\label{def: leb_area}
Let $A$ be a measurable set. Let $\cK_d$ denote the set of centrally-symmetric, bounded, convex sets in $\real^d$. For $r > 0$, the $r$-parallel body of $A$ with respect to $K \in \cK_d$ is given by the Minkowski sum $A \oplus rK$. Define the $K$-surface area of $A$ as:
\begin{align}\label{eq: leb_area}
\lambdakinf(\del A) &\defn \liminf_{\delta \to 0} \frac{\lambda(A \oplus \delta K) - \lambda(A)}{\delta},\\
\lambdaksup(\del A) &\defn \limsup_{\delta \to 0} \frac{\lambda(A \oplus \delta K) - \lambda(A)}{\delta}.
\end{align}
We call the former the lower $K$-surface area and the latter the upper $K$-surface area. If $\lambdakinf(\del A) = \lambdaksup(\del A)$, we refer to the quantity as the $K$-surface area and denote it by $\lambdak(\del A)$. If $B$ is the unit $\ell_2$-ball, we drop the subscript $B$ and refer to the quantities $\lambda(\del A), \bar \lambda(\del A),$ and $\ubar \lambda(\del A)$ as the surface area, upper surface area, and lower surface area, respectively. 
\end{definition}
Using the above notation, we can rewrite the isoperimetric inequality as
\begin{align}\label{eq: leb_iso}
\ubar \lambda(\del A)^d \geq d^d \omega_d \lambda(A)^{d-1}.
\end{align}
We shall refer to inequality~\eqref{eq: leb_iso} as the \emph{Euclidean isoperimetric inequality}. It is possible to interpret ``volume'' using a measure other than the Lebesgue measure. A popular alternative is the standard Gaussian measure, which we denote by $\gamma$. 

\begin{definition}\label{def: gauss_area}
For $K \in \cK_d$, the lower- and upper-Gaussian $K$-surface areas of a measurable set $A$ are defined as follows:
\begin{align}\label{eq: gauss_area}
\gammakinf(\del A) \defn \liminf_{\delta \to 0} \frac{\gamma(A \oplus \delta K) - \gamma(A)}{\delta},\\
\gammaksup(\del A) \defn \limsup_{\delta \to 0} \frac{\gamma(A \oplus \delta K) - \gamma(A)}{\delta}.
\end{align}
If $\gammakinf(\del A) = \gammaksup(\del A)$, we refer to the quantity as the Gaussian $K$-surface area and denote it by $\gammak(\del A)$. If $B$ is the unit $\ell_2$-ball, we drop the subscript $B$ and refer to the quantities $\gamma(\del A), \bar \gamma(\del A),$ and $\ubar \gamma(\del A)$ as the Gaussian surface area, upper Gaussian surface area, and lower Gaussian surface area, respectively. 
\end{definition}
Analogous to the Euclidean isoperimetric inequality, Sudakov and Tsirel'son~\cite{SudTsi78} and Borel~\cite{Bor75} established the \emph{Gaussian isoperimetric inequality}. This inequality states that among all sets with a given Gaussian volume, halfspaces have the minimum possible Gaussian surface areas. It is worth noting that a halfspace has infinite Euclidean surface area but its Gaussian surface area is bounded above by the constant $\sqrt{1/2\pi}$.

The \emph{reverse} isoperimetric problem is the following: What is the maximum surface area of a set in $\real^d$ with fixed volume? Equivalently, what is the minimum volume of a set in $\real^d$ with a fixed surface area? A little reflection reveals that this question does not make sense as posed, for we can have sets such as spheres that have zero volume but arbitrarily large surface area. To make sense of the reverse isoperimetric problem, it is necessary to impose some regularity conditions on the class of sets being considered to prevent ``wiggliness'' of the boundary. 

Reverse isoperimetric inequalities are more easily described in the Gaussian setting than the Euclidean setting. Ball~\cite{Ball93} established a reverse Gaussian isoperimetric inequality for convex sets: The Gaussian surface area of any convex set $A \subseteq \real^d$ is bounded above by $4d^{1/4}$. Nazarov~\cite{Naz03} further refined Ball's bound and also showed that it is essentially tight by constructing a set with $\Theta(d^{1/4})$ Gaussian surfacearea. Generalizations of Ball~\cite{Ball93} and Nazarov~\cite{Naz03} for log-concave measures were obtained in Livshyts~\cite{Liv13, Liv14}. Klivans, O'Donnell, and Servedio~\cite{KliEtal08} established a link between the Gaussian surface areas of sets and the ability to learn them efficiently under the probably-approximately-correct (PAC) and agnostic learning models. Klivans et al.\ showed that sets with small Gaussian surface areas can be learned efficiently under the Gaussian distribution. Klivans et al.\ obtained bounds on the Gaussian surface areas of cones and balls, and Kane~\cite{Kane11} bounded the Gaussian surface areas sets obtained from thresholded polynomials of a fixed degree.

In the Euclidean setting, most existing work focuses on sets in $\real^2$ and $\real^3$ with some kind of curvature constraint on the boundary of the sets. Howard and Treibergs~\cite{HowTre95} showed that if the average curvature $\kappa$ of a curve in $\real^2$ satisfies $|\kappa| \leq 1$, and if the area enclosed by the curve is small enough, then a certain ``peanut shape'' has the largest perimeter for a fixed area. Gard~\cite{Gar12} extended this result to surfaces of revolution in $\real^3$. Pan, Tang, and Wang~\cite{PanEtal10} obtained a version of the reverse isoperimetric inequality for sets in $\real^2$ by lower-bounding the perimeter in terms of the area of the set as well as the the area of the locus of its curvature centers. The one result we were able to find that holds in higher dimensions is that of Chernov, Drach, and Tatarko~\cite{CheEtal19}, where the authors showed that for \emph{convex} sets satisfying a weak notion of curvature constraint called $\lambda$-concavity, the sausage body (Minkowski sum of a line segment and an $\ell_2$-ball) has the largest surface area for a fixed volume. Another result that holds for convex sets in arbitrary dimensions is that of Ball~\cite{Ball91}; however, it involves transforming the set via a volume-preserving linear map and thus cannot be compared to the above results.

In this paper, we take a different approach towards imposing regularity conditions. Our goal will be to study reverse isoperimetric inequalities for \emph{$r$-parallel sets} which are defined as follows:
\begin{definition}
Let $r > 0$, $d \geq 1$, and $K \in \cK_d$. A set $\tilde A \subseteq \real^d$ is called an $r$-parallel set with respect to $K$ if $\tilde A = A \oplus rK$ for some measurable set $A$. 
\end{definition}
Throughout this paper, we shall be concerned with only two sets $K$: the unit ball in the $\ell_2$-norm, and the unit ball in the $\ell_\infty$-norm. We shall denote them as follows:
\begin{align*}
B &= \{x \in \real^d ~|~ \|x\|_2 \leq 1\} \quad \text{ and}\\
C &= \{x \in \real^d ~|~ \|x\|_\infty \leq 1\} = [-1,1]^d.
\end{align*}
As $C/\sqrt d \subseteq B \subseteq C$, we have that 
\begin{align*}
\frac{\lambdacinf(\del A)}{\sqrt d} &\leq \ubar \lambda(\del A) \leq \lambdacinf(\del A) \quad \text{ and }\\
\frac{\lambdacsup(\del A)}{\sqrt d} &\leq \bar \lambda(\del A) \leq \lambdacsup(\del A).
\end{align*}
The same inequalities also hold for the Gaussian measure. It turns out that the factor of $\sqrt d$ does not play an important role in our results, which are essentially identical for both notions of surface areas.

The notation $A_{\oplus r}$ and $A_{\bp r}$ will be used to represent an $r$-parallel set of some measurable set $A$ with respect to $B$ and $C$, respectively. For $K \in \{B, C\}$, the intuition is that even if $A$ has a very wiggly boundary, the set $A \oplus rK$ will have a better-behaved boundary. It is clear, though, that the boundary of $A \oplus rK$ need not be twice-continuously differentiable, or even a union of finitely many such pieces. Moreover, the sets $A \oplus rK$ need not be convex. These observations preclude the possibility of directly using any of the reverse isoperimetric inequalities known in the literature. 

Parallel sets with respect to $B$ appear prominently in the context of quermassintegrals, intrinsic volumes, and Steiner's formula for the volume of the Minkowski sum of a convex set with a ball~\cite{Sch14}. Over the years, parallel sets of arbitrary closed sets have also been investigated and some regularity properties have been established in the process. The work most relevant to ours is Stacho~\cite{Sta76}, and we shall utilize several results and techniques from that paper in the course of our proofs. For now, we point out that for $r > 0$, Stacho~\cite{Sta76} showed that for $K \in \cK_d$, the Minkowski $K$-surface area of a bounded set $A \oplus rK$ can be calculated as the limit
\begin{align*}
\lambdak(\del (A \oplus rK)) = \lim_{\delta \to 0} \frac{\lambda(A \oplus (r+\delta)K) - \lambda(A \oplus rK)}{\delta},
\end{align*}
that is $\lambdaksup(\del (A \oplus rK)) = \lambdakinf(\del (A \oplus rK))$. Recent work by Hug, Last, and Weil~\cite{HugEtal04} and Rataj and Winter~\cite{RatWin10} has strengthened the results from Stacho~\cite{Sta76}. Hug et al.~\cite{HugEtal04} showed a local version of Steiner's formula for arbitrary closed sets, whereas Rataj and Winter~\cite{RatWin10} proved results concerning rectifiability of parallel sets and established relations between various notions of surface areas of parallel sets, including the Hausdorff measure of the boundary, the lower and upper Minkowski contents of the boundary, and Minkowski's surface area from Definition~\ref{def: leb_area}.

Another motivation for considering $r$-parallel sets comes from information theory. The information theoretic concepts of entropy and Fisher information have often been compared to the geometric concepts of volume and surface area~\cite{CosCov84}. A striking similarity exists between the definition of surface area in equation~\eqref{eq: leb_area} and de Bruijn's identity from information theory: Given a random vector $X$ on $\real^d$ and a standard normal random variable $Z$ that is independent of $X$, the Fisher information of $X$, denoted by $J(X)$, satisfies the relation
\begin{align*}
\frac{d}{dt} h(X+ \sqrt t Z) \Big |_{t=0} = \frac{J(X)}{2}.
\end{align*}
This means that
\begin{align}\label{eq: it_rip}
\frac{d}{dt} e^{h(X + \sqrt t Z)} \Big|_{t=0} = \frac{e^{h(X)}J(X)}{2}.
\end{align}
Thus, the Minkowski sum with a ball is replaced by a sum with independent Gaussian noise; volume is replaced by the exponential of the entropy; and surface area is replaced by a scaled version of the Fisher information. The analogous notion of $r$-parallel sets in information theory would be the set of all random vectors $X_r \defn X + \sqrt r Z$, which we call $r$-smoothed random variables. A version of the reverse isoperimetric inequality in information theory could be stated as: Given an $r$-smoothed random variable of a fixed entropy $h_0$, how large can its scaled-Fisher information be? Surprisingly, it is very easy to obtain such an upper bound. It is a well-known fact that Fisher information is a convex functional on the space of distributions~\cite{Zam98}, so $J(X_r) = J(X + \sqrt r Z) \leq J(\sqrt r Z) = d/r$. Thus, we conclude that $e^{h(X)}J(X)/2 \leq e^{h_0}d/2r$. Does a version of the reverse isoperimetric inequality exist for $r$-parallel sets in geometry?

This is precisely the question addressed in our paper. We study two problems of interest: (i) Is it possible to upper bound the surface area of an $r$-parallel set given a bound on its volume?; and (ii) is there a version of the reverse Gaussian isoperimetric inequality for $r$-parallel sets? Our result concerning (i) may be informally stated as follows:
\begin{result}[Formal statement in Theorem~\ref{thm: lebesgue_rip_volume}]\label{res: lebesgue}
Let $r > 0$. Then the following inequalities hold for some dimension-dependent constant $C_d = e^{\Theta(d)}$:
\begin{enumerate}
\item
If $\lambda(A_{\oplus r}) \leq V$, then $\lambda(\del A_{\oplus r}) \leq C_d \frac{V}{r}$.
\item
If $\lambda(A_{\bp r}) \leq V$, then $\lambda(\del A_{\bp r}) \leq C_d \frac{V}{r}$.
\end{enumerate} 
\end{result}
Observe that the bound increases as $r$ decreases, which is to be expected, since the sets $A_{\oplus r}$ and $A_{\bp r}$ have fewer restrictions on their boundaries. It is interesting to note that the $1/r$ dependence is the same as in the information theoretic reverse isoperimetric inequality in equation~\eqref{eq: it_rip}. Having proved the reverse Euclidean isoperimetric inequality, we use a proof technique from Ball~\cite{Ball93} to establish its analog for the Gaussian measure. Our result can be informally stated as follows:
\begin{result}[Formal statement in Theorem~\ref{thm: gaussian_rip}]
Let $r > 0$. Then the following bound holds for some dimension-dependent constant $C_d = e^{\Theta(d)}$:
\begin{enumerate}
\item
$\bar \gamma(\del A_{\oplus r}) \leq \max\left(C_d, \frac{C_d}{r} \right)$.
\item
$\bar \gamma(\del A_{\bp r}) \leq \max\left(C_d, \frac{C_d}{r} \right)$.
\end{enumerate}
\end{result}

Just as in the reverse isoperimetric inequality for convex sets in Ball~\cite{Ball93} and Nazarov~\cite{Naz03}, we do not need to impose any boundedness assumptions on $A_{\oplus r}$ or $A_{\bp r}$. 

We also provide two applications of the reverse Gaussian isoperimetric inequality to learning theory. First, we show that the machinery in Klivans et al.~\cite{KliEtal08} provides computational complexity bounds for learning $r$-parallel sets under the Gaussian distribution. Our second application concerns adversarial machine learning. Notions of robust risk, analogous to Bayes risk in standard hypothesis testing, have recently been proposed in the machine learning literature. Some recent work by Bhagoji, Cullina, and Mittal~\cite{BhaEtal19} and Pydi and Jog~\cite{PydiJog20} characterizes robust risk in terms of an optimal transport cost between the data distributions of two classes in a binary classification setting. We show that for Gaussian-smoothed data distributions, the Gaussian reverse isoperimetric inequality can be used to provide sample complexity bounds for estimating robust risk. 

The structure of this paper is as follows: In Section~\ref{sec: puzzle}, we present two puzzles in $\real^2$ whose solutions capture the essence of our proof. In Section~\ref{sec: leb} and Section~\ref{sec: gauss}, we prove the reverse isoperimetric inequalities in the Euclidean and Gaussian settings, respectively. In Section~\ref{sec: rev_bmi_epi} we prove versions of the reverse Brunn-Minkowski and the reverse entropy power inequality. In Section~\ref{sec: ml}, we describe applications to learning theory. Finally, we conclude the paper in Section~\ref{sec: end}.

\paragraph{Notation: } 
\begin{itemize}
\item
The unit ball in the $\ell_2$-norm and the $\ell_\infty$-norm are denoted by $B$ and $C$, respectively. If the dimension is not clear from context, we shall use $B_d$ and $C_d$. The Euclidean ball in $\real^d$ with center $x$ and radius $r$ is denoted by $B_d(x; r)$ (or $B(x; r)$, if the dimension is clear from context). The ball centered at the origin is denoted by $B_d(r)$ or $B(r)$. The notation $C_d(x; r)$, $C(x;r)$, $C_d(r)$, and $C(r)$ is defined analogously for $\ell_\infty$-balls.
\item
Given two measurable sets $A,B \subseteq \real^d$, their Minkowski sum is given by 
$$A \oplus B = \{a+b ~\mid~ a \in A, b\in B\}.$$
\item
We use the shorthand $A_{\oplus r} \defn A \oplus rB$ and $A_{\bp r} \defn A \oplus rC$. In Sections~\ref{sec: rev_bmi_epi} and~\ref{sec: ml}, we will only consider parallel sets of the form $A_{\bc r}$, since the results for $A_{\bp r}$ are identical. The shorthand $A_r \defn A_{\bc r}$ will be used in these sections.
\item
The volume of the $d$-dimensional unit ball is denoted by $\omega_d$ and its surface area is denoted by $\Omega_d$. The exact formulas are $\omega_d = \frac{\pi^{d/2}}{\Gamma(1 + d/2)}$, and $\Omega_d = d \omega_d$.
\item
The solid angle subtended by a set $S$ towards a point $x$ is denoted by $\Omega(S; x)$.
\item
The distance of a point $x$ from a set $A$ is $d(x, A) = \inf_{a \in A} d(x, a)$, where $d$ is a metric on $\real^d$. 
\item
$\| x \|_p$ indicates the $\ell_p$-norm in $\real^d$.
\item
$\mathbbm 1\{x \in A\}$ is the indicator function for the event $x \in A$.
\item
For $N \geq 1$, we use the notation $[ N ] \defn \{1, 2, \dots, N\}$.
\item
Given functions $f, g : \mathbb N \to \real$, we say that $f = \Theta(g)$ if there exist constants $c_1, c_2 > 0$ such that $c_1 g(n) \leq f(n) \leq c_2 g(n)$.
\end{itemize}

\section{Two puzzles in $\real^2$}\label{sec: puzzle}

In this section, we present two puzzles in $\real^2$, whose solutions neatly capture the main ideas in our approach. 

\paragraph{B-Puzzle:} Consider $N \geq 1$ points $x_i \in B(x_0; 1)$, for $1 \leq i \leq N$. Let $A = \{x_i \in \real^2 ~\mid~ 0 \leq i \leq N\}$. Show that the perimeter of $A_{\oplus 1} = A \oplus B(1)$ is no more than that of $B(x_0; 2)$; i.e.,
\begin{align}\label{eq: puzzle}
\lambda(\del A_{\oplus 1}) \leq \lambda(\del B(2)) = 4\pi.
\end{align}
Figure~\ref{fig: puzzle} shows an example of the set $A_{\oplus 1}$.
\begin{figure}
\begin{center}
\includegraphics[width = 3 in]{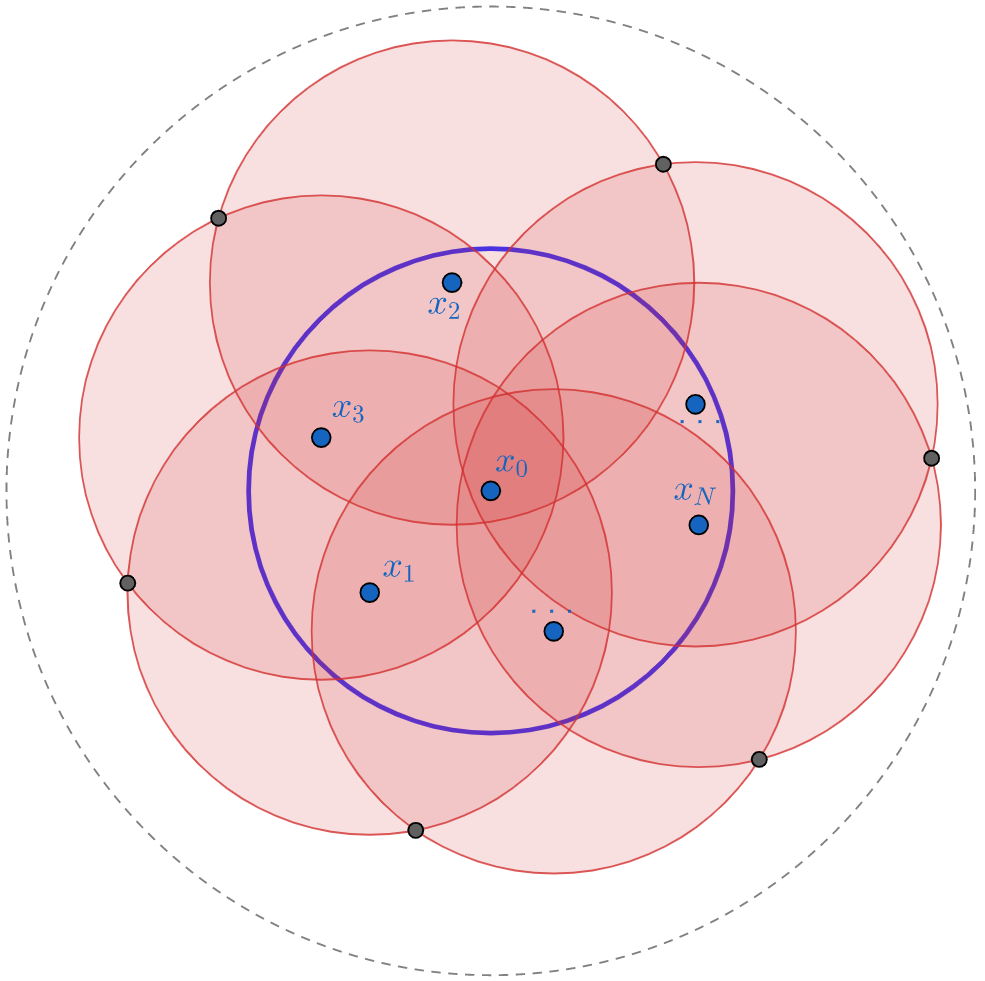}
\end{center}
\caption{Given $\{x_1, \dots, x_n\} \subseteq B(x_0; 1)$, the shaded region is the union of unit balls with centers at $x_i$, for $0 \leq i \leq N$. The problem is to upper-bound the perimeter of the shaded region with that of $B(x_0; 2)$.} \label{fig: puzzle}
\end{figure}

\paragraph{Solution:} A simple upper bound on the perimeter of $A_{\oplus 1}$ is $\sum_{i=0}^N \lambda(\del(B(x_i; 1))) = 2\pi(N+1)$; however, this bound becomes progressively weaker with increasing $N$. One may wonder whether equality is ever achieved in inequality~\eqref{eq: puzzle}, and a little reflection reveals that almost any arrangement of $N$ points on the circumference of $B(x_0; 1)$ gives equality. The only condition needed for the arrangement is that $\del A_{\oplus 1}$ contains no contribution from $\del B(x_0; 1)$. For instance, three points equally spaced on the perimeter of $B(x_0; 1)$ suffice. We make two observations:
\begin{itemize}
\item[(1)] The set $A_{\oplus 1}$ is \emph{star-shaped} from the point of view of $x_0$; i.e., any ray starting from $x_0$ intersects the boundary of $A_{\oplus 1}$ exactly once. Suppose this were not the case and a ray from $x_0$ were to intersect the boundary of $A_{\oplus 1}$ in two points $y_1$ and $y_2$, where we assume that $y_1$ is closer to $x_0$ than $y_2$. For $i \in \{1, 2\}$, the very fact that $y_i$ lies on the boundary of $A_{\oplus 1}$ means that $ B(y_i; 1) \cap B(x_0; 1)$ cannot contain any points $x_i$, except for one or more that lie on $\del B(y_i; 1) \cap B(x_0; 1)$. This immediately leads to a contradiction, since any point $x_i$ that lies on $\del B(y_2; 1) \cap B(x_0; 1)$ will lie within $B(y_1; 1) \cap B(x_0; 1)$, but not be on $\del B(y_1; 1) \cap B(x_0; 1)$.

\item[(2)] The boundary of $A_{\oplus 1}$ can be partitioned as $\del A_{\oplus 1} =   \cup_{i=0}^N (\del A_{\oplus 1})^i$, where $(\del A_{\oplus 1})^i = \cup_{i=0}^N \del B(x_i; 1) \cap \del A_{\oplus 1}$ is the arc of the circle $B(x_i; 1)$ that lies on $\del A_{\oplus 1}$. (Note that several of the sets $(\del A_{\oplus 1})^i$ may be empty.) The perimeter of $A_{\oplus 1}$ may be expressed as $\lambda(\del A_{\oplus 1}) = \sum_{i=1}^N \lambda((\del A_{\oplus 1})^i)$. Without loss of generality, suppose $(\del A_{\oplus 1})^1 = \del B(x_1; 1) \cap \del A_{\oplus 1} \neq \phi$. Clearly, we have $\lambda((\del A_{\oplus 1})^1) = \Omega ( (\del A_{\oplus 1})^1; x_1)$; i.e., the perimeter of the arc $(\del A_{\oplus 1})^1$ is simply the central angle of the arc $(\del A_{\oplus 1})^1$. Now comes our key observation: The angle subtended by the arc to $(\del A_{\oplus 1})^1$ to $x_0$, which is denoted by $\Omega((\del A_{\oplus 1})^1; x_0)$, is at least as large as $\Omega( (\del A_{\oplus 1})^1 ; x_1)/2$; i.e.,
\begin{align*}
\Omega((\del A_{\oplus 1})^1; x_0) \geq \frac{\Omega( (\del A_{\oplus 1})^1 ; x_1)}{2}.
\end{align*}
If $x_0$ lay on the circumference of $B(x_1; 1)$, this would be an exact equality by the inscribed angle theorem from geometry. In this case, the point $x_0$ might lie in the interior of $B(x_1,1)$, but it may be easily verified that the the angle subtended by the arc at $x_0$ would be at least as large as the inscribed angle of the arc. 
\end{itemize}

\begin{figure}
\begin{center}
\begin{tabular}{cc}
\includegraphics[width=3in]{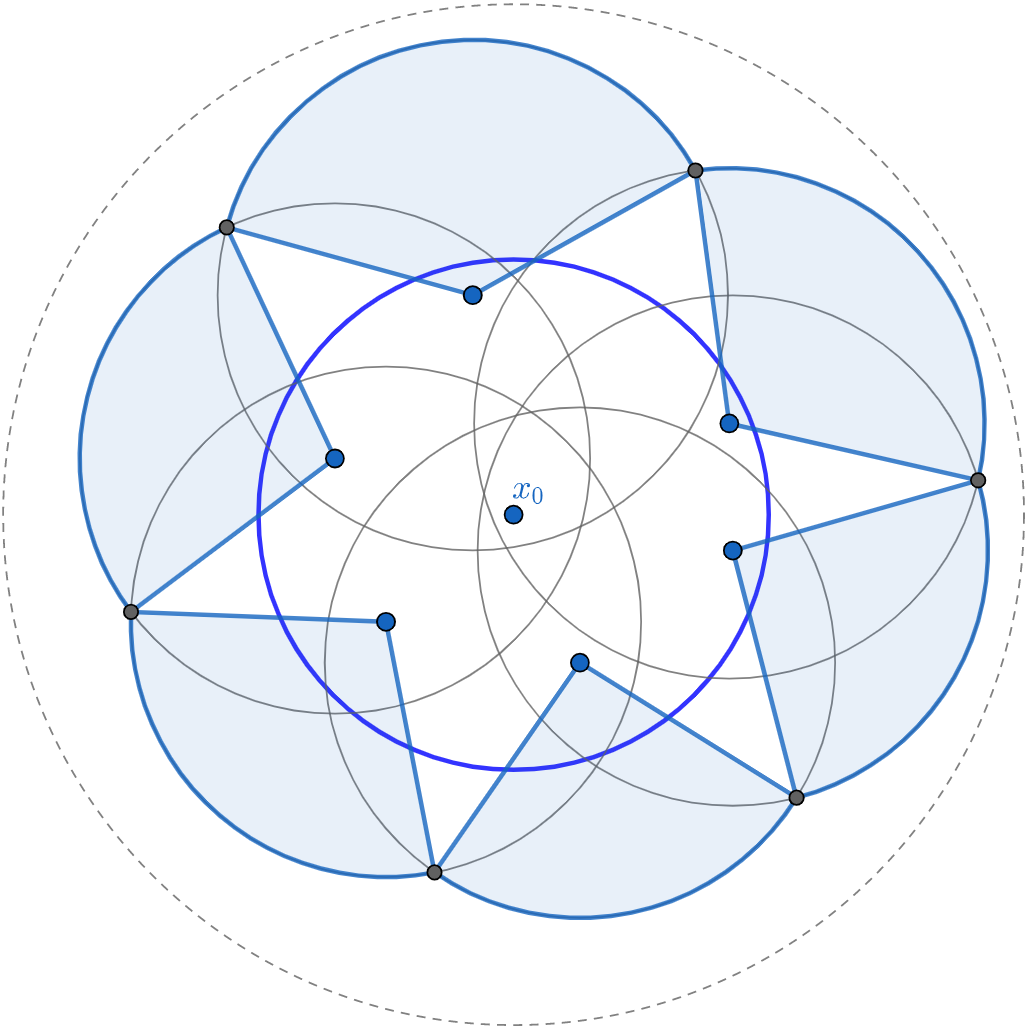} & \includegraphics[width=3in]{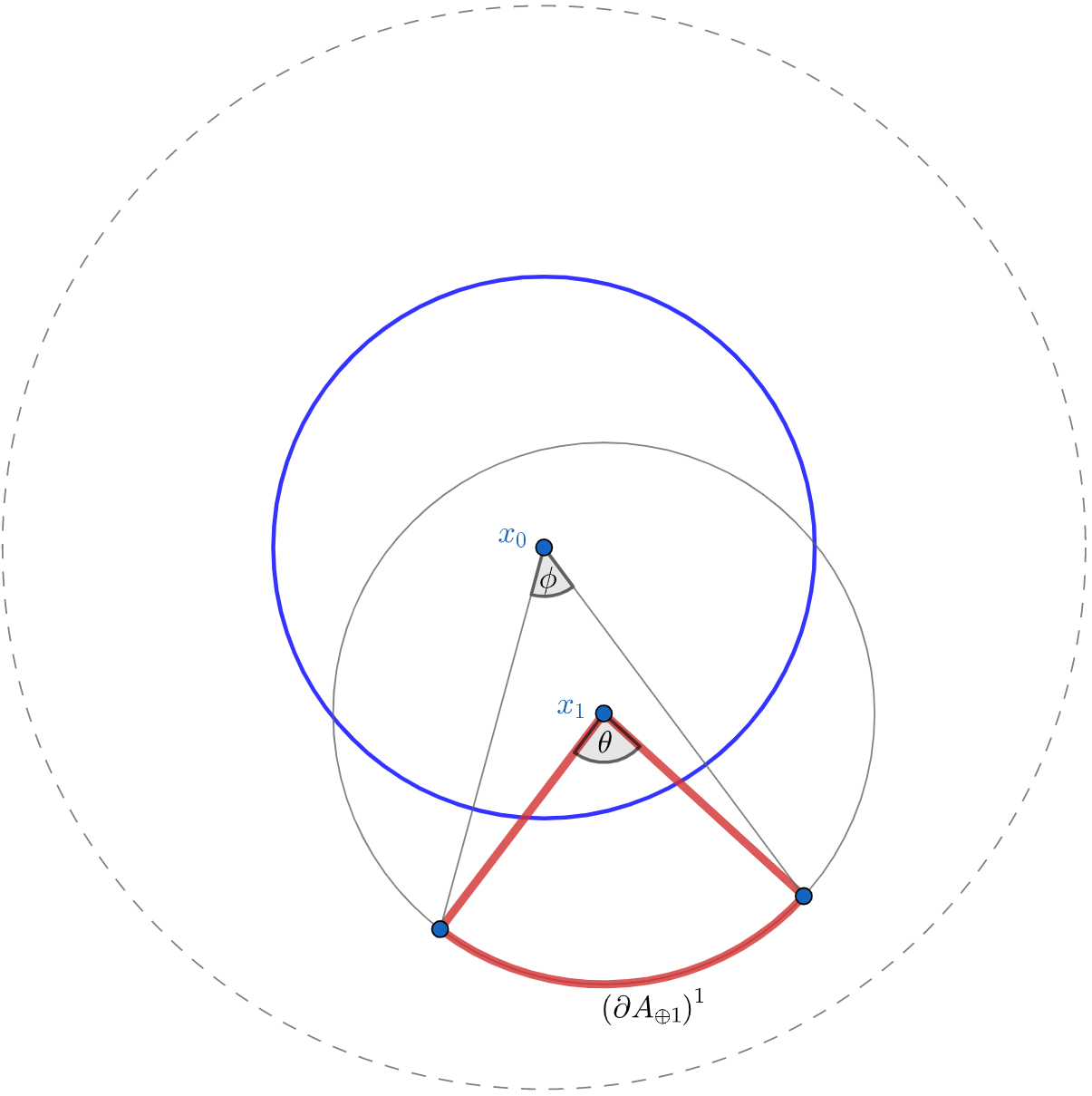} \\
(a) Boundary of $A_{\oplus 1}$ & (b) Angle subtended by $(\del A_{\oplus 1})^1$ at $x_0$  
\end{tabular}
\caption{In (a), the boundary of $A_{\oplus 1}$ is shown to be a union of arcs of various circles. In (b), it follows from the inscribed angle theorem that $\phi \geq \theta/2$.} \label{fig: puzzle_solution}
\end{center}
\end{figure}

Observations (1) and (2) are illustrated in Figure~\ref{fig: puzzle_solution}. We now combine observations (1) and (2). Since $A_{\oplus 1}$ is star-shaped from the point of view of $x_0$, we have
\begin{align*}
2\pi = \sum_{i=0}^N \Omega( (\del A_{\oplus 1})^i ; x_0).
\end{align*}
Using the inequality from observation (2), we obtain
\begin{align*}
2\pi = \sum_{i=0}^N \Omega( (\del A_{\oplus 1})^i ; x_0) \geq \frac{1}{2}\sum_{i=0}^N \Omega( (\del A_{\oplus 1})^i ; x_i) = \frac{\lambda(\del A_{\oplus 1})}{2}.
\end{align*}
This leads to $2\pi \geq \frac{\lambda(\del A_{\oplus 1})}{2}$, which completes the solution to the puzzle. \hfill $\square$

\paragraph{C-Puzzle:} Consider $N \geq 1$ points $x_i \in C(x_0; 1)$, for $1 \leq i \leq N$. Let $A = \{x_i \in \real^2 ~\mid~ 0 \leq i \leq N\}$. Show that the perimeter of $A_{\bp 1} = A \oplus C(1)$ is no more than that of $C(x_0; 2)$; i.e.,
\begin{align}\label{eq: puzzle}
\lambda(\del A_{\bp 1}) \leq \lambda(\del C(2)) = 16.
\end{align}
Figure~\ref{fig: puzzlec} shows an example of the set $A_{\bp 1}$.
\begin{figure}
\begin{center}
\includegraphics[width = 3 in]{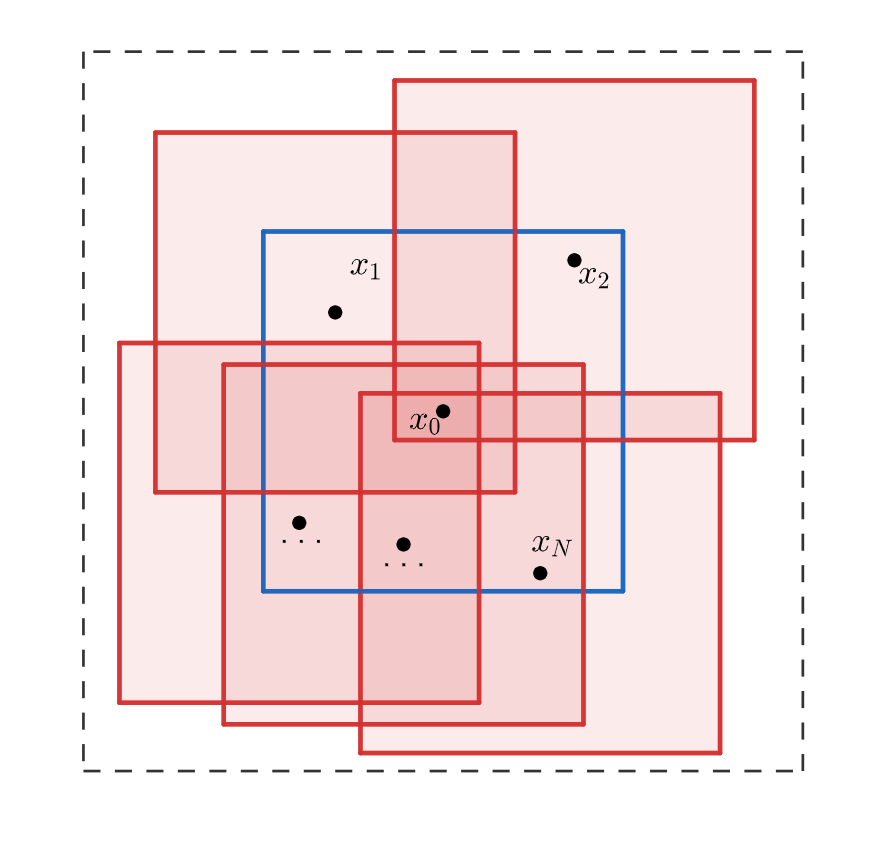}
\end{center}
\caption{Given $\{x_1, \dots, x_n\} \subseteq C(x_0; 1)$, the shaded region is the union of unit $\ell_\infty$-balls with centers at $x_i$, for $0 \leq i \leq N$. The problem is to upper-bound the perimeter of the shaded region with that of $C(x_0; 2)$.} \label{fig: puzzlec}
\end{figure}

\paragraph{Solution:}
The boundary $\del A_{\bp 1}$ consists of horizontal and vertical segments, and so the perimeter can be calculated by measuring the total length of the horizontal segments and the vertical segments. Observe that although the set $A_{\bp 1}$ is nonconvex, every axis-aligned line intersects the $\del A_{\bp 1}$ in at most two points. Lines that do not intersect $C(x_0; 2)$ also do not intersect $A_{\bp 1}$. For lines that do intersect $C(x_0; 2)$, we argue as follows. Consider the horizontal line $y = y_0$ that intersects the boundary $\del A_{\bp 1}$ in the points $(a_1, y_0), (a_2,y_0), \dots, (a_\ell,y_0)$ where $a_1 < \dots < a_\ell$. The points $(a, y_0)$ where $a \in (a_1, a_1+2)$ lie in the interior of $A_{\bp 1}$ and cannot lie on the boundary, and thus $a_2-a_1\geq 2$. Arguing similarly, we have $a_\ell - a_{\ell-1} \geq 2$. If $\ell \geq 4$, then $a_{\ell-1} > a_2$. Combining these inequalities we arrive at $a_\ell - a_1 > 4$, which is not possible since $A_{\bp 1} \subseteq C(x_0; 2)$. An identical argument also works for vertical lines. Thus, the projection of $\del A_{\bp 1}$ on the vertical axis is exactly twice the sum of all the vertical segments in $\del A_{\bp 1}$, which is bounded above by $8$. The same holds true for the horizontal segments, and we conclude that the perimeter of $A_{\bp 1}$ is bounded above by 16. \hfill $\square$


\section{Reverse Euclidean isoperimetric inequality for parallel sets}\label{sec: leb}

We first prove versions of the puzzles in Section~\ref{sec: puzzle} in $d$ dimensions.
\begin{proposition}\label{prop: puzzle_d_dim}
Let $r, \delta > 0$. Consider $N \geq 1$ points $x_i \in B_d(x_0; r)$, for $1 \leq i \leq N$. Let $A = \{x_i \in \real^d ~\mid~ 0 \leq i \leq N\}$. The surface area of $A_{\bc r}$ satisfies the following inequality: 
\begin{align}
\lambda(\del A_{\bc r}) \leq 2^{d-1} \Omega_d r^{d-1}.
\end{align}
\end{proposition}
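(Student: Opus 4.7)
The plan is to generalize the solution of the $B$-puzzle (observations (1) and (2) in Section~\ref{sec: puzzle}) to $d$ dimensions. Since $\bc$ denotes the same operation as $\oplus$ and the points $x_i$ lie in a Euclidean ball, $A_{\bc r} = A \oplus rB_d$ is a union of $N+1$ closed balls of radius $r$ whose centers all lie inside $B_d(x_0; r)$.

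First I would show that $A_{\bc r}$ is star-shaped with respect to $x_0$: every ray from $x_0$ meets $\del A_{\bc r}$ at exactly one point. The argument from the planar puzzle carries over verbatim: if a ray met the boundary at $y_1$ (closer to $x_0$) and $y_2$ (farther), then some $x_j \in \del B_d(y_2;r) \cap B_d(x_0; r)$ would have to lie strictly inside $B_d(y_1;r)$, contradicting $y_1 \in \del A_{\bc r}$. Next I decompose $\del A_{\bc r} = \bigcup_{i=0}^N (\del A_{\bc r})^i$ with $(\del A_{\bc r})^i = \del B_d(x_i;r) \cap \del A_{\bc r}$, so each piece lies on the sphere of radius $r$ centered at $x_i$ and hence has $(d-1)$-dimensional measure $r^{d-1}\,\Omega((\del A_{\bc r})^i; x_i)$.

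The crux is a $d$-dimensional analog of the inscribed-angle inequality: for any measurable $S \subseteq \del B_d(x_i;r)$ visible from a point $x_0 \in B_d(x_i;r)$ (i.e., $(y-x_i)\cdot(y-x_0) \geq 0$ for each $y \in S$),
\begin{align*}
\Omega(S; x_i) \leq 2^{d-1}\,\Omega(S; x_0).
\end{align*}
I would prove this pointwise by writing both solid angles as surface integrals: the spherical-measure element at $x_i$ is $dA/r^{d-1}$, while the solid-angle element at $x_0$ (seen from the outward side) is $\cos\theta\, dA/\|y-x_0\|^{d-1}$, where $\theta$ is the angle between the outward normal $(y-x_i)/r$ and $y - x_0$. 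Setting $t = x_0 - x_i$ and $s = (y-x_i)\cdot t / r$, one computes $\cos\theta = (r-s)/\|y-x_0\|$ and $\|y-x_0\|^2 = r^2 - 2rs + \|t\|^2 \leq 2r(r-s)$, where the last bound uses $\|t\| \leq r$. Combined with $r-s \leq 2r$ (for $d \geq 2$; the case $d=1$ is immediate), this yields $\|y-x_0\|^d \leq 2^{d-1} r^{d-1}(r-s)$, which is equivalent to the desired pointwise inequality $1/r^{d-1} \leq 2^{d-1}\cos\theta/\|y-x_0\|^{d-1}$. Integrating over $S$ delivers the claim; visibility of each piece $(\del A_{\bc r})^i$ from $x_0$ is a direct consequence of star-shapedness.

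Finally, star-shapedness at $x_0$ gives $\sum_{i=0}^N \Omega((\del A_{\bc r})^i; x_0) = \Omega_d$, the total solid angle around $x_0$, so chaining the three ingredients produces
\begin{align*}
\lambda(\del A_{\bc r}) = r^{d-1}\sum_{i=0}^N \Omega((\del A_{\bc r})^i; x_i) \leq 2^{d-1} r^{d-1}\sum_{i=0}^N \Omega((\del A_{\bc r})^i; x_0) = 2^{d-1}\Omega_d\, r^{d-1}.
\end{align*}
The main obstacle is the geometric inequality comparing solid angles at $x_i$ and at $x_0$; the star-shapedness step and the summation bookkeeping are straightforward analogs of the planar argument in Section~\ref{sec: puzzle}.
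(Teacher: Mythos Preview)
Your proposal is correct and follows essentially the same route as the paper's proof: star-shapedness at $x_0$, decomposition of $\del A_{\bc r}$ into spherical caps, and a $d$-dimensional inscribed-angle inequality comparing $\Omega(\cdot;x_i)$ with $\Omega(\cdot;x_0)$. The only cosmetic differences are that the paper partitions via Voronoi cells (so the pieces are disjoint by construction rather than up to null sets) and derives the pointwise solid-angle bound from the chord-length identity $d(y,\tilde x_0)=2r\cos\theta$, which is algebraically the same as your bound $\|y-x_0\|^2\le 2r(r-s)$; also, the visibility condition $(y-x_i)\cdot(y-x_0)\ge 0$ follows already from $x_0\in B_d(x_i;r)$ (the supporting half-space at $y$ contains the ball) rather than from star-shapedness per se.
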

\begin{proof}
Let the Voronoi region $D_i$ associated to each $x_i$ be defined as
$$D_i = \{x \in \real^d ~\mid~ \|x - x_j\|_2 \geq \|x - x_i\|_2 \text{ if } j \leq i \text{ and } \|x - x_j\|_2 > \| x - x_i \|_2 \text{ if } j > i\}.$$ Note that the $D_i$'s are pairwise disjoint convex regions, not necessarily bounded, which cover all of $\real^d$. Thus, we may write
\begin{align}
\lambda(\del A_{\bc r}) &= \sum_{i=0}^N \lambda(\del A_{\bc r} \cap D_i )\\
&\stackrel{(a)}= \sum_{i=0}^N \lambda(\del B(x_i; r) \cap D_i )\\
&= \sum_{i=0}^N r^{d-1} \Omega( \del B(x_i; r) \cap D_i ; x_i). \label{eq: thumb}
\end{align}
Here, step $(a)$ follows from the definition of the Voronoi region. 

Unfortunately, for $d > 2$, the inscribed angle theorem no longer holds; i.e., the solid angle subtended by a region on the sphere to an arbitrary point on the sphere is not a fixed fraction of the solid angle subtended to the center of the sphere. However, we are able to prove a lower bound on the inscribed angle in Lemma~\ref{lemma: inscribed_angle_d}, stated as follows.

\begin{lemma}\label{lemma: inscribed_angle_d}
The solid angle subtended by $\del B(x_i; r) \cap D_i$ at $x_0$ satisfies the bound
\begin{align*}
\Omega(\del B(x_i; r) \cap D_i; x_0) \geq \frac{\Omega(\del B(x_i; r) \cap D_i; x_i)}{2^{d-1}}.
\end{align*}
\end{lemma}
\begin{proof}
Let $S \defn \del B(x_i; r) \cap D_i \subseteq \del B(x_i; r)$, and assume that $S \neq \phi$ without loss of generality. Note that $x_0 \notin S$, but $x_0 \in B(x_i; r)$. Consider a small surface area element $dS$ in $S$ around a point $x \in S$. Note that $\Omega(dS; x_i) = \lambda(dS)/(r^{d-1}\Omega_d)$. As shown in Figure~\ref{fig: inscribed_angle_d_dim}, let $\angle x_i x x_0 = \theta$. Extend the line joining $x$ and $x_0$ to $\tilde x_0$ on $B(x_i; r)$. Using trigonometry, we can check that $d(x, \tilde x_0) = 2r\cos \theta$. Also, it is not hard to check that 
\begin{align}
\Omega(dS; x_0) &= \frac{\lambda(dS) \cos \theta}{\Omega_d d(x, x_0)^{d-1}}\\
&\geq \frac{\lambda(dS) \cos \theta}{\Omega_d d(x, \tilde x_0)^{d-1}}\\
&= \frac{\lambda(dS) \cos \theta}{\Omega_d (2r\cos\theta)^{d-1}}\\
&= \Omega(dS; x_i) \cdot \frac{1}{2^{d-1}\cos^{d-2}\theta}\\
&\geq \frac{\Omega(dS; x_i)}{2^{d-1}}. \label{eq: toby}
\end{align}
Noting that $\Omega(S; x_0) = \int_S \Omega(dS; x_0)$ and $\Omega(S; x_i) = \int_S \Omega(dS; x_i)$, we may integrate the inequality in \eqref{eq: toby} to conclude the desired result.
\begin{figure}
\begin{center}
\includegraphics[width = 3 in]{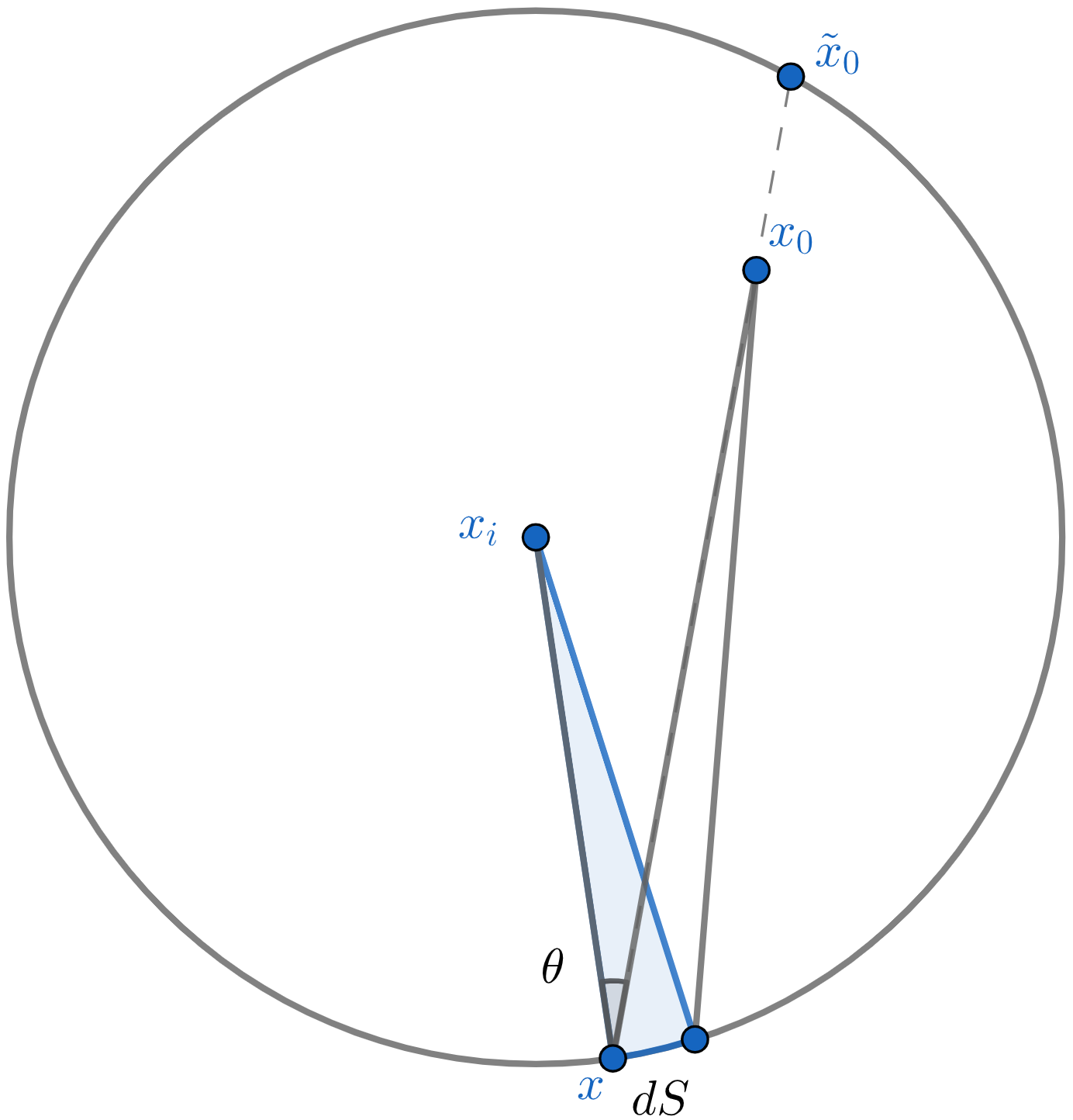}
\caption{Comparison between the angle subtended by a small area element $dS$ to $x_0$ and $x_i$.} 
\label{fig: inscribed_angle_d_dim}
\end{center}
\end{figure}
\end{proof}

\begin{lemma}\label{lemma: A_r_star_shaped}
The set $A_{\bc r}$ is star-shaped from the point of view of $x_0$.
\end{lemma}
\begin{proof}
The proof is essentially identical to observation (1) from Section~\ref{sec: puzzle}, so we omit it.
\end{proof}
We can now complete the proof of Proposition~\ref{prop: puzzle_d_dim} as follows: 
\begin{align*}
\lambda(\del A_{\bc r}) &\stackrel{(a)}= \sum_{i=0}^N r^{d-1} \Omega( \del B(x_i; r) \cap D_i ; x_i)\\
&\stackrel{(b)}\leq 2^{d-1} \sum_{i=0}^N r^{d-1} \Omega(\del B(x_i; r) \cap D_i; x_0)\\
&\stackrel{(c)}= 2^{d-1} \Omega_d r^{d-1}.
\end{align*}
Here, step $(a)$ is the equality from equation~\eqref{eq: thumb}, step $(b)$ follows from Lemma~\ref{lemma: inscribed_angle_d} and step $(c)$ follows from Lemma~\ref{lemma: A_r_star_shaped}.
\end{proof}

\begin{proposition}\label{prop: boxes}
Let $r, \delta > 0$. Consider $N \geq 1$ points $x_i \in C_d(x_0; r)$, for $1 \leq i \leq N$. Let $A = \{x_i \in \real^d ~\mid~ 0 \leq i \leq N\}$. The surface area of $A_{\bp r}$ satisfies the following inequality: 
\begin{align}
\lambda(\del A_{\bp r}) \leq 2^{2d-1} d r^{d-1}.
\end{align}
\end{proposition}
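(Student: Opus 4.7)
The plan is to extend the 2-dimensional C-puzzle from Section~\ref{sec: puzzle} to arbitrary dimension by decomposing $\partial A_{\bp r}$ according to the coordinate direction of the outward normal. Since $A_{\bp r} = \bigcup_{i=0}^N C(x_i; r)$ is a finite union of axis-aligned cubes, $\partial A_{\bp r}$ decomposes (up to a $(d-1)$-measure-zero set of edges and corners) as $F_1 \cup \dots \cup F_d$, where $F_j$ collects the boundary pieces whose outward normal is $\pm e_j$. For each $j \in [d]$, write points of $\real^d$ as $(\bar y, t)$ with $\bar y \in H_j \defn \{x_j = 0\}$ and $t \in \real$, and let $\ell_{\bar y}$ denote the line through $(\bar y, 0)$ parallel to $e_j$. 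Because $F_j$ consists of pieces parallel to $H_j$, a Fubini-type argument yields
\begin{equation*}
\lambda(F_j) = \int_{H_j} \#\bigl(\ell_{\bar y} \cap F_j\bigr)\, d\bar y.
\end{equation*}

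The crux of the proof is to show that $\#(\ell_{\bar y} \cap F_j) \leq 2$ for almost every $\bar y$. For any $i$, the intersection $\ell_{\bar y} \cap C(x_i; r)$ is either empty or a closed segment of length exactly $2r$, so every connected component of $\ell_{\bar y} \cap A_{\bp r}$ has length at least $2r$. On the other hand, since $x_i \in C(x_0; r)$ implies $C(x_i; r) \subseteq C(x_0; 2r)$, we have $A_{\bp r} \subseteq C(x_0; 2r)$, and hence $\ell_{\bar y} \cap A_{\bp r}$ has total length at most $4r$. If there were two disjoint connected components, their lengths plus the positive gap separating them would exceed $4r$, contradicting this bound. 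Thus $\ell_{\bar y} \cap A_{\bp r}$ is either empty or a single interval, contributing at most two transversal crossings with $\partial A_{\bp r}$, all of which lie in $F_j$ for generic $\bar y$.

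To conclude, the projection of $A_{\bp r}$ onto $H_j$ lies inside a $(d-1)$-dimensional cube of side $4r$, which has $(d-1)$-volume $(4r)^{d-1}$. Combining the pointwise bound with this support bound gives $\lambda(F_j) \leq 2 \cdot (4r)^{d-1} = 2^{2d-1} r^{d-1}$, and summing over $j \in [d]$ yields $\lambda(\partial A_{\bp r}) \leq 2^{2d-1} d r^{d-1}$, as claimed. The only real obstacle is measure-theoretic bookkeeping: verifying that lower-dimensional features of $\partial A_{\bp r}$ contribute nothing to the $(d-1)$-measure, that the coarea/Fubini formula for $\lambda(F_j)$ is valid, and that the two-crossings bound holds outside a $(d-1)$-measure-zero set of $\bar y$. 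All of these follow routinely because $A_{\bp r}$ is a finite union of axis-aligned cubes, and in particular a polytope whose boundary pieces are coordinate-hyperplane-parallel.
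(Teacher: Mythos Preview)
Your proof is correct and follows essentially the same approach as the paper: both decompose $\partial A_{\bp r}$ by the coordinate direction of the normal, establish that every axis-aligned line meets $\partial A_{\bp r}$ in at most two points (using that each connected component of the line's intersection with $A_{\bp r}$ has length $\ge 2r$ while the whole intersection sits inside a segment of length $4r$), and then integrate over the projection onto the orthogonal hyperplane to get $\lambda(F_j)\le 2(4r)^{d-1}$. The paper phrases the last step via the surface integral $\sum_i \int_{\partial A_{\bp r}} |n_i\cdot s|\,ds$ rather than your explicit Fubini/counting formulation, but the content is identical.
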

\begin{proof}
The following two observations are crucial: (1) The normal vector to the boundary of $\del A_{\bp r}$ exists almost everywhere and is aligned with one of the coordinate axes; and (2) Any axis-aligned line that intersects surface $\del A_{\bp r}$ at most twice. 

Observation (1) is immediate since $A_{\bp r}$ is a union of finitely many cubes. Without loss of generality, consider the axis-parallel line $y(t) = t(1, 0, \dots, 0) + (0, v_1, \dots, v_{d-1})$ for some $v \defn (v_1, \dots, v_{d-1}) \in \real^{d-1}$. Suppose that this line intersects $\del A_{\bp r}$ in the points $p_i = (t_i, v)$ for $1 \leq i \leq \ell$ such that $t_1 < t_2 \dots < t_\ell$. Note that $\ell$ must be an even number, so $t_2 \neq t_{\ell-1}$. The point $(v, t_\ell)$, which is the rightmost point on the line $y(t)$, lies on the boundary of some $C_d(x_i; r)$. Thus, all the points $\{(t, v) ~|~ t \in (t_\ell-2r, v)$ lie in the interior of $C_d(x_i; r)$ and cannot lie on the boundary $\del A_{\bp r}$. This gives the inequality $t_{\ell-1} \leq t_\ell - 2r$. A similar argument for $t_1$, which is the leftmost point on the line $y(t)$, gives the inequality $t_2 \geq t_1 + 2r$. If $\ell \geq 4$, then $t_\ell - 2r \geq t_{\ell-1} > t_2 \geq t_1 + 2r$ , giving $t_\ell - t_1 > 4r$. Since $A_{\bp r} \subseteq C_d(x_0; 2r)$, we must have $t_\ell - t_1 \leq 4r$, which is a contradiction. This means the assumption $\ell \geq 3$ was incorrect and $\ell$ is at most 2.

Observation (1) gives that the surface area measured via $\lambda_C$ is identical to that measured using $\lambda_B$. Moreover, this value is given by the surface integral
\begin{align*}
\lambda(\del A_{\bp r}) = \sum_{i=1}^d \int_{\del A_{\bp r}} \abs{n_i \cdot s} ds,
\end{align*}
where $n_i$ for $i \in [d]$ are the standard basis vectors. Observation (2) gives that for each $i \in [d]$,
\begin{align*}
\int_{\del A_{\bp r}} \abs{n_i \cdot s} ds &\leq \int_{\del C_d(x_0; 2r)} \abs{n_i \cdot s}
= 2(4r)^{d-1}.
\end{align*}
This gives the surface area inequality 
\begin{align}\label{eq: 2dr}
\lambda(\del A_{\bp r}) \leq 2d (4r)^{d-1}.
\end{align}
\end{proof}
Our next proposition establishes an inequality for the volumes $\lambda(A_{\bc r+\delta} \setminus A_{\bc r})$ and $\lambda(A_{\bp r+\delta} \setminus A_{\bp r})$ using Propositions~\ref{prop: puzzle_d_dim} and ~\ref{prop: boxes}, respectively. 
\begin{proposition}\label{prop: kneser}
Let $r, \delta > 0$. Consider $N \geq 1$ points $x_i \in B_d(x_0; r)$, for $1 \leq i \leq N$. Let $A = \{x_i \in \real^d ~\mid~ 0 \leq i \leq N\}$. Then the following inequality holds:
\begin{align}\label{eq: ballz}
\lambda(A_{\bc r+\delta} \setminus A_{\bc r}) &\leq 2^{2d-1} ((r+\delta)^d - r^d).
\end{align}
Similarly, if $A = \{x_i \in \real^d ~\mid~ 0 \leq i \leq N\}$ where $x_i \in C_d(x_0; r)$, then the following inequality holds:
\begin{align}\label{eq: boxez}
\lambda(A_{\bp r+\delta} \setminus A_{\bp r}) &\leq 2^{2d-1} ((r+\delta)^d - r^d).
\end{align}
\end{proposition}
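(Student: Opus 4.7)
The plan is to obtain both volume inequalities by integrating the surface-area bounds of Propositions~\ref{prop: puzzle_d_dim} and~\ref{prop: boxes} over the radial parameter. The crucial (but trivial) observation that enables this is that the hypotheses of those propositions are inherited along the radius: since $x_i \in B_d(x_0; r)$, we also have $x_i \in B_d(x_0; s)$ for every $s \geq r$, so Proposition~\ref{prop: puzzle_d_dim} applies to the parallel set $A_{\bc s}$ with parameter $s$ at every $s \in [r, r+\delta]$, yielding the pointwise bound $\lambda(\del A_{\bc s}) \leq 2^{d-1} \Omega_d\, s^{d-1}$. The analogous remark for cubes gives $\lambda(\del A_{\bp s}) \leq 2^{2d-1} d\, s^{d-1}$ for the second part.

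For~\eqref{eq: ballz} I would invoke the result of Stacho cited in the introduction: for any bounded set $A$ and any $s > 0$, the two-sided derivative $\frac{d}{ds}\lambda(A_{\bc s}) = \lambda(\del A_{\bc s})$ exists pointwise. Combined with our uniform bound on this derivative on $[r, r+\delta]$, this makes $s \mapsto \lambda(A_{\bc s})$ Lipschitz on the interval, hence absolutely continuous, so the fundamental theorem of calculus gives
\begin{align*}
\lambda(A_{\bc r+\delta}\setminus A_{\bc r}) = \int_r^{r+\delta} \lambda(\del A_{\bc s})\, ds \leq 2^{d-1}\Omega_d \int_r^{r+\delta} s^{d-1}\, ds = 2^{d-1}\omega_d\bigl((r+\delta)^d - r^d\bigr).
\end{align*}
Since $B_d \subseteq [-1,1]^d$ we have $\omega_d \leq 2^d$, so $2^{d-1}\omega_d \leq 2^{2d-1}$, which is the constant claimed in~\eqref{eq: ballz}. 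The proof of~\eqref{eq: boxez} is the same argument with Proposition~\ref{prop: boxes} replacing Proposition~\ref{prop: puzzle_d_dim}: integrating $\lambda(\del A_{\bp s}) \leq 2^{2d-1} d\, s^{d-1}$ over $[r, r+\delta]$ produces $2^{2d-1}\bigl((r+\delta)^d - r^d\bigr)$ directly, with the constant matching the claim without any further estimate.

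The only delicate step is the use of the fundamental theorem of calculus, since $s \mapsto \lambda(A_{\bc s})$ is a priori only monotone nondecreasing and could in principle carry a singular part. Stacho's pointwise two-sided differentiability of this function, together with our uniform upper bound on the derivative on $[r, r+\delta]$, is precisely what certifies Lipschitz regularity and thus absolute continuity on the interval of interest. This is the one place where I expect to need to be careful; once this regularity is in hand, the rest of the argument is a one-line computation of $\int_r^{r+\delta} s^{d-1}\, ds$ and a comparison of constants.
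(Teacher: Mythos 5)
Your approach is genuinely different from the paper's: the paper avoids any direct appeal to the fundamental theorem of calculus and instead uses Stacho's generalization of Kneser's Lemma, which gives the discrete inequality $\lambda((A\oplus tb K)\setminus(A\oplus ta K)) \leq t^d\,\lambda((A\oplus bK)\setminus(A\oplus aK))$, followed by a telescoping decomposition of $[r, r+\delta]$ into $M$ geometric steps and a limit $M\to\infty$; the only analytic input is the existence of the right-hand derivative at the single point $r$. Your argument instead integrates the surface-area bound across the whole interval, which is cleaner conceptually but requires the FTC, and that is exactly where your justification slips.

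The slip is that Stacho does \emph{not} give two-sided differentiability of $s \mapsto \lambda(A_{\bc s})$. The paper's citation of Stacho asserts only that $\bar\lambda(\del A_{\bc s}) = \ubar\lambda(\del A_{\bc s})$, i.e.\ that the one-sided (right) derivative exists for $s>0$; Stacho's result in fact allows the left and right derivatives to disagree at countably many $s$. A concrete counterexample: take $d=1$ and $A = \{0, v\}$ with $v>0$. Then $\lambda(A_{\bc s}) = \min(4s, v+2s)$, which has right derivative $2$ and left derivative $4$ at $s = v/2$. So the premise ``the two-sided derivative exists pointwise'' fails, and the MVT route to Lipschitz continuity is not available as stated. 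The fix is easy, though: Stacho \emph{does} give that $V(s)=\lambda(A_{\bc s})$ is continuous and has a right-hand derivative at every $s>0$, equal to $\lambda(\del A_{\bc s})$, which your Proposition~\ref{prop: puzzle_d_dim} bounds by $2^{d-1}\Omega_d s^{d-1}$ on $[r, r+\delta]$. A continuous function on a compact interval whose upper right Dini derivative is bounded in absolute value by $M$ at every point is $M$-Lipschitz (the standard mean-value inequality for Dini derivatives), hence absolutely continuous, and the FTC then applies with $V'$ equal to the (a.e.\ existing) right derivative almost everywhere. With that correction your integral computation and the relaxation $\omega_d \leq 2^d$ are both correct, and the argument is sound; it is arguably more transparent than the paper's telescoping, at the modest cost of needing the Dini-derivative form of the mean value inequality rather than just the ordinary MVT.
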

\begin{proof}
We use the following result from Stacho~\cite{Sta76}, which is a generalization of Kneser's Lemma~\cite{Kne51}:

\begin{lemma}[Theorem 4 from Stacho~\cite{Sta76}] \label{lemma: kneser}
Let $K$ be a bounded, centrally-symmetric, convex set in $\real^d$ and let $A$ be an arbitrary bounded set in $\real^d$. Then for any $0< a \leq b$ and any $t \geq 1$,
\begin{align*}
\lambda( (A\oplus tb K) \setminus (A\oplus ta K)) \leq t^d \lambda( (A\oplus b K) \setminus (A\oplus a K)).
\end{align*}
\end{lemma}

We shall now apply Lemma~\ref{lemma: kneser} to upper bound $\lambda(A_{\bc r+\delta} \setminus A_{\bc r})$. Let $M \in \mathbb N$ and set $t \defn (1 + \delta/r)^{1/M}$. By Lemma~\ref{lemma: kneser}, we have that
\begin{align*}
\lambda(A_{\bc r+\delta} \setminus A_{\bc r}) &= \lambda(A_{\bc r t^M} \setminus A_{\bc r})\\
&= \sum_{i=1}^M \lambda(A_{\bc rt^i} \setminus A_{\bc rt^{i-1}})\\
&\leq \left( \sum_{i=1}^M t^{(i-1)d}\right) \lambda(A_{\bc rt} \setminus A_{\bc r}) \\
&\stackrel{(a)}= \frac{t^{Md}-1}{t^d-1} \cdot \lambda(A_{\bc rt} \setminus A_{\bc r})\\
&= \frac{t^{Md}-1}{t^d-1} \frac{ \lambda(A_{\bc rt} \setminus A_{\bc r})}{rt-r} \cdot (rt-r).
\end{align*}
Step $(a)$ follows from Lemma~\ref{lemma: kneser}. Taking the limit as $M \to \infty$ and $t \to 1_+$,
\begin{align}\label{eq: boxplus1}
\lim_{t \to 1_+} \frac{\lambda(A_{\bc rt} \setminus A_{\bc r})}{rt-r} \stackrel{(a)}= \lambda(\del A_{\bc r}) \stackrel{(b)}\leq 2^{d-1} \Omega_d r^{d-1}. 
\end{align}
Here, the existence of the limit in step $(a)$ follows from Stacho~\cite{Sta76}, and the inequality in $(b)$ follows from Proposition~\ref{prop: puzzle_d_dim}.
Additionally, we have the limits
\begin{align}
\lim_{M \to \infty} \frac{t^{Md}-1}{t^d-1} \cdot (rt-r) &= \lim_{M \to \infty} \frac{(1+\delta/r)^d-1}{(1+\delta/r)^{d/M} - 1} \cdot r ((1+\delta/r)^{1/M} - 1) \nonumber \\
&= ((1+\delta/r)^d-1)\frac{r}{d}\nonumber \\
&= \frac{(r+\delta)^d - r^d}{dr^{d-1}}. \label{eq: boxplus2}
\end{align}
Combining inequalities~\eqref{eq: boxplus1} and ~\eqref{eq: boxplus2} and noting that $\Omega_d/d = \omega_d$, we arrive at
\begin{align}
\lambda(A_{\bc r+\delta} \setminus A_{\bc r}) \leq 2^{d-1} \omega_d ((r+\delta)^d - r^d). 
\end{align}
Equation~\eqref{eq: boxez} is proved similarly. The only difference is in inequality~\eqref{eq: boxplus1}, which changes to
\begin{align}
\lim_{t \to 1_+} \frac{\lambda(A_{\bp rt} \setminus A_{\bp r})}{rt-r} = \lambda_C(\del A_{\bp r}) \stackrel{(a)}\leq (2d)(4r)^{d-1},
\end{align}
where the inequality in step $(a)$ follows from Proposition~\ref{prop: boxes}. Combining this with equation~\eqref{eq: boxplus2}, we conclude
\begin{align}
\lambda(A_{\bp r+\delta} \setminus A_{\bp r}) \leq 2^{2d-1} ((r+\delta)^d - r^d).
\end{align}
\end{proof}

Before stating our next proposition, we define the packing number of a set in $\real^d$.
\begin{definition}
Let $A \subseteq \real^d$ be a measurable set and let $\epsilon > 0$. A collection of points denoted by $\text{Packing}(A; \epsilon) \defn \{x_i ~\mid~ 1 \leq i \leq N\}$ is said to be an $\epsilon$-packing of $A$ if for every $x, y \in \text{Packing}(A; \epsilon)$, we have $d(x,y) > \epsilon$, where $d(\cdot, \cdot)$ is a metric on $\real^d$. The $\epsilon$-packing number of $A$, denoted by $N_B(A; \epsilon)$ and $N_C(A; \epsilon)$ for the $\ell_2$ and $\ell_\infty$ metrics, respectively, is the largest size of an $\epsilon$-packing of $A$.
\end{definition}

\begin{proposition}\label{prop: packing}
Let $r, \delta > 0$. Consider $N \geq 1$ arbitrary points $x_1, \dots, x_N$ in $\real^d$, and let $A = \{x_1, \dots, x_N\}$. Then 
\begin{align}\label{eq: prop2}
\lambda(A_{\bc r+\delta} \setminus A_{\bc r}) \leq N_B(A; r) \cdot 2^{d-1} \omega_d ((r+\delta)^d - r^d),
\end{align}
and
\begin{align}\label{eq: prop2_box}
\lambda(A_{\bp r+\delta} \setminus A_{\bp r}) \leq N_C(A; r) \cdot 2^{2d-1}  ((r+\delta)^d - r^d),
\end{align}
\end{proposition}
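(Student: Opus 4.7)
The plan is to reduce Proposition~\ref{prop: packing} to Proposition~\ref{prop: kneser} via a covering argument supplied by a maximal packing. Fix a maximal $r$-packing $\{y_1, \ldots, y_M\} \subseteq A$ in the $\ell_2$-metric; by the definition of the packing number, $M \leq N_B(A; r)$. Maximality forces that every $x \in A \setminus \{y_1,\ldots,y_M\}$ lies within distance $r$ of some $y_j$, since otherwise the packing could be enlarged by adjoining $x$. Thus, defining
\begin{align*}
A_j \defn \{ x \in A : \|x - y_j\|_2 \leq r \}, \qquad j \in [M],
\end{align*}
we have $A = \bigcup_{j=1}^M A_j$ and consequently $A_{\bc s} = \bigcup_{j=1}^M (A_j)_{\bc s}$ for every $s > 0$.

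The crucial containment is
\begin{align*}
A_{\bc r+\delta} \setminus A_{\bc r} \;\subseteq\; \bigcup_{j=1}^M \big( (A_j)_{\bc r+\delta} \setminus (A_j)_{\bc r} \big).
\end{align*}
Indeed, any $x$ in the left-hand set belongs to some $(A_j)_{\bc r+\delta}$ by the union decomposition of $A_{\bc r+\delta}$, and lies outside $A_{\bc r} \supseteq (A_j)_{\bc r}$. Subadditivity of Lebesgue measure then gives
\begin{align*}
\lambda(A_{\bc r+\delta} \setminus A_{\bc r}) \;\leq\; \sum_{j=1}^M \lambda\big( (A_j)_{\bc r+\delta} \setminus (A_j)_{\bc r} \big).
\end{align*}

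Since $y_j \in A_j$ and $A_j \subseteq B_d(y_j; r)$, each $A_j$ satisfies the hypothesis of Proposition~\ref{prop: kneser} with $y_j$ playing the role of the distinguished point $x_0$. Applying the first inequality of Proposition~\ref{prop: kneser} yields $\lambda((A_j)_{\bc r+\delta} \setminus (A_j)_{\bc r}) \leq 2^{d-1}\omega_d((r+\delta)^d - r^d)$, and summing over $j \in [M]$ while bounding $M \leq N_B(A;r)$ establishes inequality~\eqref{eq: prop2}. The $\ell_\infty$-case is entirely parallel: use a maximal $\ell_\infty$ $r$-packing (of size $M \leq N_C(A;r)$), define $A_j$ by the condition $\|x - y_j\|_\infty \leq r$ so that $A_j \subseteq C_d(y_j;r)$, and invoke the second inequality of Proposition~\ref{prop: kneser} to replace the per-cell bound by $2^{2d-1}((r+\delta)^d - r^d)$, yielding~\eqref{eq: prop2_box}. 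There is no serious obstacle here; the one place to be careful is the set-containment above, but it is purely formal once one observes that $A_{\bc r} \supseteq (A_j)_{\bc r}$ for every $j$.
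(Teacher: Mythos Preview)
Your proof is correct and follows essentially the same approach as the paper: take a maximal $r$-packing of $A$, cover $A$ by the clusters $A_j$ around packing points, use the containment $A_{\bc r+\delta}\setminus A_{\bc r}\subseteq \bigcup_j \big((A_j)_{\bc r+\delta}\setminus (A_j)_{\bc r}\big)$, and apply the single-cluster shell bound from Proposition~\ref{prop: kneser} to each piece. The only cosmetic difference is that the paper writes $k = N_B(A;r)$ for its maximal packing while you (more cautiously) write $M \leq N_B(A;r)$; either way the final bound is the same.
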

\begin{corollary}\label{cor: prop2}
The surface area of $A_{\bc r}$ satisfies 
\begin{align*}
\lambda(\del A_{\bc r}) \leq N_B(A; r)2^{d-1}  \omega_d d r^{d-1}.
\end{align*}
Similarly, the surface area of $A_{\bp r}$ satisfies 
\begin{align*}
\lambdacsup(\del A_{\bp r}) \leq N_C(A; r)2^{2d-1}d r^{d-1}.
\end{align*}
\end{corollary}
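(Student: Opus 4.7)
The plan is to derive the corollary by differentiating the volume inequalities of Proposition~\ref{prop: packing} with respect to $\delta$ at $\delta = 0_+$. The key identity is $A_{\bc r} \oplus \delta B = A \oplus (r+\delta) B = A_{\bc r+\delta}$, together with the containment $A_{\bc r} \subseteq A_{\bc r+\delta}$ (which holds since $0 \in B$); these combine to give
\begin{align*}
\lambda(A_{\bc r} \oplus \delta B) - \lambda(A_{\bc r}) = \lambda(A_{\bc r+\delta} \setminus A_{\bc r}),
\end{align*}
with the analogous identity obtained by replacing $B$ by $C$ throughout. This lets us match the left-hand sides of the inequalities in Proposition~\ref{prop: packing} directly with the Minkowski difference quotients defining the surface areas.

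For the bound on $\lambda(\del A_{\bc r})$, I divide inequality~\eqref{eq: prop2} by $\delta > 0$ and send $\delta \to 0_+$. On the right-hand side, $((r+\delta)^d - r^d)/\delta \to d r^{d-1}$, so the limit is $N_B(A;r) \cdot 2^{d-1} \omega_d \cdot d r^{d-1}$. On the left-hand side, the limit exists and equals $\lambda(\del A_{\bc r})$ by the result of Stacho~\cite{Sta76} quoted immediately before Definition~\ref{def: leb_area}, which applies because $A = \{x_1, \dots, x_N\}$ is bounded. Combining the two sides yields the claimed bound.

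The bound on $\lambdacsup(\del A_{\bp r})$ follows in exactly the same way from inequality~\eqref{eq: prop2_box}: dividing by $\delta$ and taking $\limsup$ as $\delta \to 0_+$ produces $\lambdacsup(\del A_{\bp r})$ on the left by Definition~\ref{def: leb_area}, while the right-hand side tends to $N_C(A;r) \cdot 2^{2d-1} \cdot d r^{d-1}$. There is no substantive obstacle here; the corollary is an immediate infinitesimal consequence of Proposition~\ref{prop: packing}, the only (trivial) care being to identify each side of the inequality with the correct Minkowski-sum difference quotient and to appeal to Stacho's existence result for the unadorned $\lambda(\del A_{\bc r})$ in the first bound.
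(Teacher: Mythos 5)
Your proof is correct and takes essentially the same route as the paper, which simply says the corollary "follows by taking the limit as $\delta \to 0$" in Proposition~\ref{prop: packing}; you merely spell out the identification of $\lambda(A_{\bc r+\delta}\setminus A_{\bc r})$ with the Minkowski difference quotient and the appeal to Stacho for existence of the limit, which is exactly what the paper leaves implicit.
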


\begin{proof}
The proof of Corollary~\ref{cor: prop2} follows by taking the limit as $\delta \to 0$, so we shall only prove the bound~\eqref{eq: prop2}. We shall first prove the result for parallel sets with respect to $B$. In what follows, let $d(\cdot, \cdot)$ be the $\ell_2$ distance on $\real^d$. 

Let $\widehat A = \{\widehat x_1, \dots, \widehat x_k\} \subseteq A$ be a maximal $r$-packing of the set $A$; i.e., if $i \neq j$, then $d(\widehat x_i, \widehat x_j) > r$, but $d(x, \widehat A) \leq r$ for every $x \in A$. Note that $k = N_B(A; r)$. Let $\widehat A^i = \{x \in A ~\mid~ d(x, \widehat x_i) \leq r\}$. Observe that $\cup_{i=1}^k \widehat A^i = A$, but the $\widehat A^i$'s need not be mutually exclusive. This means that 
\begin{align*}
A_{\bc r+\delta} \setminus A_{\bc r} &= \cup_{i=1}^k  \left( (\widehat A^i)_{\bc r+\delta}\right) \setminus A_{\bc r}\\
&= \cup_{i=1}^k  \left( (\widehat A^i)_{\bc r+\delta} \setminus A_{\bc r} \right)\\
&\subseteq \cup_{i=1}^k  \left( (\widehat A^i)_{\bc r+\delta} \setminus (\widehat A^i)_{\bc r} \right), 
\end{align*}
so
\begin{align}
\lambda(A_{\bc r+\delta} \setminus A_{\bc r}) &\leq \lambda\left( \cup_{i=1}^k  \left( (\widehat A^i)_{\bc r+\delta} \setminus (\widehat A^i)_{\bc r} \right)\right)\\
&\leq \sum_{i=1}^k \lambda \left( (\widehat A^i)_{\bc r+\delta} \setminus (\widehat A^i)_{\bc r} \right) \label{eq: prop2_sum}\\
&\stackrel{(a)} \leq k \cdot 2^{d-1} \omega_d ((r+\delta)^d - r^d)\\
&\stackrel{(b)}\leq N_B(A; r) \cdot 2^{d-1} \omega_d ((r+\delta)^d - r^d).
\end{align}
Here, inequality $(a)$ follows directly from Proposition~\ref{prop: puzzle_d_dim}, and $(b)$ follows from the maximal property of the packing. The proof for parallel sets with respect to $C$ is identical to the one for $B$, so we shall omit it.
\end{proof}

\begin{theorem}\label{thm: lebesgue_rip_volume}
Let $r, \delta > 0$, and let $A_{\bc r}$ be an $r$-parallel set in $\real^d$ satisfying $\lambda(A_{\bc r}) \leq V$. Then the following inequalities hold:
\begin{align*}
\lambda(A_{\bc r+\delta} \setminus A_{\bc r}) &\leq \frac{V}{r^d} \cdot \left(2^{2d-1} ((r+\delta)^d - r^d)\right), \quad \text{ and }\\
\lambda(\del A_{\bc r}) &\leq \frac{V}{ r} \cdot \left(2^{2d-1} d \right).
\end{align*}
Similarly, if $\lambda(A_{\bc r}) \leq V$, then the following inequalities hold:
\begin{align*}
\lambda(A_{\bp r+\delta} \setminus A_{\bp r}) &\leq \frac{V}{r^d} \cdot \left(2^{2d-1} ((r+\delta)^d - r^d)\right), \quad \text{ and }\\
\lambda_C(\del A_{\bp r}) &\leq \frac{V}{ r} \cdot \left(2^{2d-1} d \right).
\end{align*}
\end{theorem}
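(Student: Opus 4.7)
The plan is to combine Corollary~\ref{cor: prop2} and Proposition~\ref{prop: packing}, which already control $\lambda(\del A_{\bc r})$ and $\lambda(A_{\bc r+\delta}\setminus A_{\bc r})$ in terms of the packing number $N_B(A;r)$, with a simple volume lower bound that forces $N_B(A;r)$ itself to be small whenever $\lambda(A_{\bc r})\leq V$. Once we have that bound on the packing number, the theorem falls out by a one-line substitution.

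First I would fix a maximal $r$-packing $\{x_1,\dots,x_N\}\subseteq A$ of $A$ in the $\ell_2$-metric, so $N = N_B(A;r)$. Because the pairwise distances strictly exceed $r$, the balls $B(x_i;r/2)$ are pairwise disjoint; and because each $x_i\in A$, one has $B(x_i;r/2) = x_i+(r/2)B \subseteq A\oplus (r/2)B \subseteq A\oplus rB = A_{\bc r}$. Summing volumes gives
\[
N_B(A;r)\cdot \omega_d(r/2)^d \;\leq\; \lambda(A_{\bc r}) \;\leq\; V,
\]
so $N_B(A;r) \leq 2^d V/(\omega_d r^d)$. Plugging this into Corollary~\ref{cor: prop2} and Proposition~\ref{prop: packing} respectively yields
\[
\lambda(\del A_{\bc r}) \;\leq\; \frac{2^d V}{\omega_d r^d}\cdot 2^{d-1}\omega_d d\,r^{d-1} \;=\; \frac{V}{r}\cdot 2^{2d-1}d,
\]
and
\[
\lambda(A_{\bc r+\delta}\setminus A_{\bc r}) \;\leq\; \frac{2^d V}{\omega_d r^d}\cdot 2^{d-1}\omega_d\bigl((r+\delta)^d-r^d\bigr) \;=\; \frac{V}{r^d}\cdot 2^{2d-1}\bigl((r+\delta)^d-r^d\bigr),
\]
which are exactly the first pair of claimed inequalities.

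The $C$-case is entirely parallel: replacing the $\ell_2$-packing by a maximal $\ell_\infty$-packing $\{x_1,\dots,x_N\}\subseteq A$ at scale $r$, the cubes $C(x_i;r/2)$ are pairwise disjoint, each has volume $r^d$, and all sit inside $A\oplus (r/2)C\subseteq A_{\bp r}$. Hence $N_C(A;r)\cdot r^d \leq V$, i.e., $N_C(A;r)\leq V/r^d$. Substituting this into the $C$-parts of Corollary~\ref{cor: prop2} and Proposition~\ref{prop: packing} produces the advertised $V/r$ and $V/r^d$ bounds on $\lambda_C(\del A_{\bp r})$ and $\lambda(A_{\bp r+\delta}\setminus A_{\bp r})$ with the same constant $2^{2d-1}$.

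There is no genuine obstacle here: the substantive work sits in Propositions~\ref{prop: puzzle_d_dim}--\ref{prop: packing}, and this theorem is a routine packing-versus-volume argument. The only subtle point worth checking is that the packing must be drawn from $A$ itself (not from $A_{\bc r}$), so that the half-radius balls or cubes lie inside $A_{\bc r/2}\subseteq A_{\bc r}$ rather than only inside $A_{\bc 3r/2}$; this is automatic for any maximal packing of $A$, and it is precisely what produces the factor $2^d$ in the packing bound that sharpens the constant to $2^{2d-1}$.
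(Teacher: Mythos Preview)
Your packing-versus-volume computation is exactly the one the paper uses, and once Proposition~\ref{prop: packing} and Corollary~\ref{cor: prop2} are available for a general set $A$ the argument is indeed a one-line substitution. The gap is that those results, as stated and proved in the paper, apply only when $A=\{x_1,\dots,x_N\}$ is a \emph{finite} point set: the whole chain Proposition~\ref{prop: puzzle_d_dim} $\Rightarrow$ Proposition~\ref{prop: kneser} $\Rightarrow$ Proposition~\ref{prop: packing} is developed under that hypothesis. You invoke them for an arbitrary $A$ with $\lambda(A_{\bc r})\le V$, which is not yet justified.

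The paper closes this gap by an approximation step you have omitted: take finite subsets $A^n\subseteq A$ with $A\subseteq (A^n)_{1/n}$, so that $A_{\bc t-1/n}\subseteq (A^n)_{\bc t}\subseteq A_{\bc t}$; apply Proposition~\ref{prop: packing} to each $A^n$ (using $N_B(A^n;r)\le N_B(A;r)$); and then pass to the limit using Stacho's continuity of $t\mapsto\lambda(A_{\bc t})$ to obtain inequality~\eqref{eq: A_thick} for $A$ itself. Only after this step does your packing bound $N_B(A;r)\le 2^dV/(\omega_d r^d)$ come into play. So your proposal has the right endgame but skips the reduction from general $A$ to finite $A$, which is precisely where Stacho's result is needed and is not entirely routine.
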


\begin{proof}
We prove the result for parallel sets of the form $A_{\bc r}$ first. Since $A$ is compact (closed and bounded), for each $n \geq 1$, there exists a finite set $A^n \subseteq A$ such that $A \subseteq (A^n)_{1/n}$. Clearly, the sets $A^n$ converge to $A$ in the Hausdorff metric, since $d_{\text{Hausdorff}(A, A^n)} \leq 1/n$. For any $t > 0$, we have $(A^n)_{\bc t} \subseteq A_{\bc t}$ and $A_{\bc t} \subseteq (A^n)_{\bc t +1/n}$. Equivalently, for all large enough $n$, we have the inclusion
\begin{align*}
A_{\bc t-1/n} \subseteq (A^n)_{\bc t} \subseteq A_{\bc t},
\end{align*}
which implies
\begin{align*}
\lambda(A_{\bc t-1/n}) \leq \lambda((A^n)_{\bc t}) \leq \lambda(A_{\bc t}).
\end{align*}
From Stacho~\cite{Sta76}, the volume function $t \to \lambda(A_{\bc t})$ is continuous, so 
\begin{align}\label{eq: stacho_volume}
\lambda((A^n)_{\bc t}) \to \lambda(A_{\bc t}). 
\end{align}
Note that
\begin{align*}
\lambda((A^n)_{\bc r+\delta} \setminus (A^n)_{\bc r}) = \lambda((A^n)_{\bc r+\delta}) - \lambda((A^n)_{\bc r})) &\stackrel{(a)} \leq N_B(A^n; r) \cdot 2^{d-1} \omega_d ((r+\delta)^d - r^d)\\
&\stackrel{(b)}\leq N_B(A; r) \cdot 2^{d-1} \omega_d ((r+\delta)^d - r^d),
\end{align*}
where $(a)$ is true because each of the sets $A^n$ consists of finitely many points and Proposition~\ref{prop: packing} may be applied; and $(b)$ is true since $A^n \subseteq A$. Taking the limit as $n\to \infty$ and using the volume convergence from equation~\eqref{eq: stacho_volume}, we conclude that
\begin{align}\label{eq: A_thick}
\lambda(A_{\bc r+\delta} \setminus A_{\bc r}) \leq N_B(A; r) \cdot 2^{d-1} \omega_d ((r+\delta)^d - r^d).
\end{align}
The last step is to bound $N_B(A; r)$ in terms of the volume of $A_{\bc r}$. Consider any $r$-packing of $A$ given by $\text{Packing}(A; r) = \{x_i ~\mid~ 1 \leq i \leq N\}$. Clearly, we have $\cup_{i=1}^N B(x_i; r/2) \subseteq A_{\bc r/2} \subseteq A_{\bc r}$. Comparing volumes and noting that $B(x_i; r/2)$ are disjoint, we conclude that
\begin{align*}
N_B(A; r) \leq \frac{V}{\lambda(B(r/2))} = \frac{V}{\omega_d (r/2)^d},
\end{align*}
leading to the inequality 
\begin{align*}
\lambda(A_{\bc r+\delta} \setminus A_{\bc r}) \leq \frac{V}{r^d} \cdot 2^{2d-1}  ((r+\delta)^d - r^d).
\end{align*}
To prove the bound on $\del(A_{\bc r})$, we can divide both sides in inequality~\eqref{eq: A_thick} by $\delta$ and take the limit as $\delta \to 0$. By the results in Stacho~\cite{Sta76}, the limit of the left hand side exists, and we conclude that
\begin{equation*}
\lambda(\del A_{\bc r}) \leq \frac{V}{ r} \cdot \left(2^{2d-1} d \right).
\end{equation*}

The proof for $A_{\bp r}$ follows the same lines as above. The only change is the continuity of the volume function $\lambda(A_{\bp t})$ (as opposed to $\lambda(A_{\oplus t})$), which is also provided by Stacho~\cite{Sta76}. Letting $N_C(A; r)$ denote the packing number of $A$ with respect to $\ell_\infty$-balls of radius $r$, we arrive at the bound
\begin{align}\label{eq: A_thick_box}
\lambda(A_{\bp r+\delta} \setminus A_{\bp r}) \leq N_C(A; r) \cdot 2^{2d-1} ((r+\delta)^d - r^d).
\end{align}
Bounding $N_C(A; r)$ by $V/r^d$, we conclude 
\begin{align}\label{eq: A_thick_box_pack}
\lambda(A_{\bp r+\delta} \setminus A_{\bp r}) \leq \frac{V}{r^d} \cdot 2^{2d-1} ((r+\delta)^d - r^d).
\end{align}
Dividing by $\delta$ and taking the limit as $\delta \to 0$, 
\begin{equation*}
\lambda_C(\del A_{\bp r}) \leq \frac{V}{r} \cdot (d2^{2d-1}).
\end{equation*}	
\end{proof}

\begin{remark}\label{rem: kostya}
For convex sets of the form $A_{\bc r}$, the results in Chernov et al.~\cite{CheEtal19} yield a tighter bound than our Theorem~\ref{thm: lebesgue_rip_volume}. If $\lambda(A_{\bc r}) \leq V$, then
\begin{align*}
\lambda(\del A_{\bc r}) \leq \frac{V}{r}.
\end{align*}
\end{remark}

\begin{corollary}\label{cor: lebesgue_rip}
Let $R \geq r >0$, and let $\delta > 0$. Let $A$ be an arbitrary closed set contained in $B(R)$. Then the following bounds hold for the $r$-parallel set $A_{\bc r}$:
\begin{align*}
\lambda(A_{\bc r+\delta} \setminus A_{\bc r}) &\leq \frac{(R+r/2)^d}{(r/2)^d} \cdot 2^{d-1} \omega_d ((r+\delta)^d - r^d), \quad \text{ and }\\
\lambda(\del A_{\bc r}) &\leq \frac{(R+r/2)^d}{(r/2)^d} \cdot \left(2^{d-1} d \omega_d r^{d-1}\right).
\end{align*}
The corresponding bound for $A_{\bp r}$ is as follows:
\begin{align*}
\lambda(A_{\bp r+\delta} \setminus A_{\bp r}) &\leq \frac{\omega_d(R+ r\sqrt d/2)^d}{r^d} \cdot 2^{2d-1} ((r+\delta)^d - r^d), \quad \text{ and }\\
\lambda_C(\del A_{\bp r}) &\leq \frac{\omega_d(R+ r\sqrt d/2)^d d}{r} \cdot 2^{2d-1} .
\end{align*}

\end{corollary}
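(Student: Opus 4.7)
The corollary is a geometric refinement of Theorem~\ref{thm: lebesgue_rip_volume}: instead of bounding the relevant packing numbers via the volume of $A_{\bc r}$ (or $A_{\bp r}$) itself, I would exploit the containment $A \subseteq B(R)$ to bound the packing numbers directly. The proof of Theorem~\ref{thm: lebesgue_rip_volume} goes through approximation by finite sets $A^n$, an application of Proposition~\ref{prop: packing} to obtain
\begin{align*}
\lambda(A_{\bc r+\delta} \setminus A_{\bc r}) &\leq N_B(A;r)\cdot 2^{d-1}\omega_d((r+\delta)^d - r^d),\\
\lambda(A_{\bp r+\delta} \setminus A_{\bp r}) &\leq N_C(A;r)\cdot 2^{2d-1}((r+\delta)^d - r^d),
\end{align*}
and then a bound on the packing numbers in terms of the volume of the parallel body. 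All of these steps carry over verbatim in the present setting, so the only new work is the packing-number estimate.

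For $A_{\bc r}$, I would take a maximal $r$-packing $\{x_1,\ldots,x_N\}$ of $A$. The open Euclidean balls $B(x_i;r/2)$ are pairwise disjoint, and since $A\subseteq B(R)$ each $x_i$ lies in $B(R)$, so $\bigcup_i B(x_i;r/2)\subseteq B(R+r/2)$. Comparing Lebesgue measures gives
\begin{align*}
N_B(A;r)\cdot \omega_d (r/2)^d \leq \omega_d(R+r/2)^d,
\end{align*}
i.e.\ $N_B(A;r)\leq (R+r/2)^d/(r/2)^d$. Substituting into the inequality above proves the first bound of the corollary; dividing by $\delta$ and letting $\delta\to 0$ (using $\lim_{\delta\to 0}((r+\delta)^d-r^d)/\delta = dr^{d-1}$ together with the existence of the $K$-surface area limit from Stacho~\cite{Sta76}) yields the stated surface-area bound.

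For $A_{\bp r}$, the same strategy works, but now an $\ell_\infty$-packing $\{x_i\}$ of $A$ gives disjoint axis-aligned cubes $C_d(x_i; r/2)$ of volume $r^d$ each. To pack them into a ball of controlled radius I use the inclusion $C \subseteq \sqrt d\, B$, which gives $C_d(x_i; r/2) \subseteq B(x_i; r\sqrt d/2) \subseteq B(R + r\sqrt d/2)$. Comparing volumes yields
\begin{align*}
N_C(A;r)\cdot r^d \leq \omega_d(R + r\sqrt d/2)^d,
\end{align*}
so $N_C(A;r)\leq \omega_d(R+r\sqrt d/2)^d/r^d$. Plugging this into the second packing inequality produces the $A_{\bp r+\delta}\setminus A_{\bp r}$ bound, and taking $\delta\to 0$ gives the surface-area bound on $\lambda_C(\del A_{\bp r})$.

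The proof contains no serious obstacle: the entire machinery of Theorem~\ref{thm: lebesgue_rip_volume} is reused unchanged, and the only decision is the choice of enclosing ball for the disjoint $r/2$-packing balls. The minor subtlety is selecting the right comparison ball in the $\ell_\infty$ case, where one pays a factor of $\sqrt d$ because $\ell_\infty$-cubes of radius $r/2$ are inscribed in Euclidean balls of radius $r\sqrt d/2$ rather than $r/2$.
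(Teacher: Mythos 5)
Your proposal is correct and follows essentially the same route as the paper: reuse the machinery of Theorem~\ref{thm: lebesgue_rip_volume} verbatim, and replace the volume-based packing-number bound with the direct geometric estimates $N_B(A;r)\leq (R+r/2)^d/(r/2)^d$ (from disjoint $r/2$-balls inside $B(R+r/2)$) and $N_C(A;r)\leq \omega_d(R+r\sqrt d/2)^d/r^d$ (from disjoint $r/2$-cubes inscribed in $B(R+r\sqrt d/2)$). The paper phrases the second bound via the containment $A\oplus (r/2)C\subseteq B(R+r\sqrt d/2)$, which is the same calculation.
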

\begin{proof}
The proof is identical to that of Theorem~\ref{thm: lebesgue_rip_volume}, with the only change being that the $r$-packing number of $A$ is bounded as
\begin{align*}
N_B(A; r) \leq \frac{(R+r/2)^d}{(r/2)^d}.
\end{align*}
The packing number $N_C(A; r)$ is bounded using the crude upper bound $A \oplus (r/2)C \subseteq B(R + r\sqrt d/2)$ to obtain
\begin{align*}
N_C(A; r) \leq \frac{\omega_d(R+ r\sqrt d/2)^d }{r^d}.
\end{align*}
\end{proof}


\section{Reverse Gaussian isoperimetric inequality for parallel sets}\label{sec: gauss}

We now prove a reverse isoperimetry inequality for parallel sets under the Gaussian measure in $\real^d$. In the Gaussian isoperimetric inequality, the notion of Gaussian surface area used is the lower-Gaussian surface area. For reverse isoperimetric inequalities, it makes more sense to use the upper-Gaussian surface, since the aim is to provide upper bounds on the Gaussian surface area. For many well-behaved sets such as convex sets or sets with twice-continuously differentiable boundaries, the two notions of surface areas are identical~\cite{Naz03}. Moreover, the Gaussian surface area is obtained by integrating the Gaussian distribution with respect to the Hausdorff measure on the boundary of the set; i.e.,
\begin{align*}
\gamma(\del A) = \lim_{\delta \to 0} \frac{\gamma(A_\delta \setminus A)}{\delta} = \int_{\del A} \phi(x) d\cH^{d-1}(x),
\end{align*}
where $\cH^{d-1}$ is the $(d-1)$-dimensional Hausdorff measure. We do not investigate whether the Gaussian surface area of parallel sets is also given by such a surface integral. Our main result is as follows:
\begin{theorem}\label{thm: gaussian_rip}
Let $A \subseteq \real^d$ be an arbitrary closed set, and for $r>0$, consider the $r$-parallel set $A_{\bc r}$. Let $\delta > 0$. Then the upper-Gaussian surface area of $A_{\bc r}$ satisfies the bound
\begin{align*}
\bar \gamma(\del A_{\bc r}) \leq \max \left(C, \frac{C}{r} \right),
\end{align*}
where $C$ is a dimension-dependent constant that grows like $e^{\Theta(d)}$. Similarly, the the upper-Gaussian surface area of $A_{\bp r}$ satisfies the bound
\begin{align*}
\bar \gamma_C(\del A_{\bp r}) \leq \max \left(C, \frac{C}{r} \right),
\end{align*}
where $C$ is a dimension-dependent constant that grows like $e^{\Theta(d)}$.
\end{theorem}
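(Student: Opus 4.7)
My plan is to prove the theorem in two stages: first, establish the bound $\bar\gamma(\del A_{\bc r}) \le C_d/r$ for $r \in (0,1]$ by combining an annular decomposition in Gaussian space with Corollary~\ref{cor: lebesgue_rip}; and second, bootstrap from the $r=1$ case to all $r > 1$ using the semigroup identity $A_{\bc r} = (A_{\bc(r-1)})_{\bc 1}$.

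For stage one, I start from
\begin{align*}
\bar\gamma(\del A_{\bc r}) = \limsup_{\delta \to 0} \frac{\gamma(A_{\bc r+\delta} \setminus A_{\bc r})}{\delta}
\end{align*}
and decompose $\real^d$ into annular shells $S_k = B(k+1) \setminus B(k)$ for $k = 0, 1, 2, \ldots$, on which the Gaussian density is uniformly bounded by $\phi(k) = e^{-k^2/2}/(2\pi)^{d/2}$. This gives
\begin{align*}
\gamma(A_{\bc r+\delta} \setminus A_{\bc r}) \le \sum_{k \ge 0} \phi(k)\, \lambda\bigl((A_{\bc r+\delta} \setminus A_{\bc r}) \cap B(k+1)\bigr).
\end{align*}
I exploit the inclusion $(A_{\bc r+\delta} \setminus A_{\bc r}) \cap B(k+1) \subseteq (A_k)_{\bc r+\delta} \setminus (A_k)_{\bc r}$ for $A_k \defn A \cap B(k+1+r+\delta)$ (valid because any $x \in B(k+1)$ with $d(x,A) \in (r, r+\delta]$ is realized by some $a \in A$ with $|a| \le k+1+r+\delta$, while $d(x, A_k) \ge d(x, A)$), then apply Corollary~\ref{cor: lebesgue_rip} to each $A_k$. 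Dividing by $\delta$ and sending $\delta \to 0$ produces
\begin{align*}
\bar\gamma(\del A_{\bc r}) \le \frac{2^{2d-1} d\, \omega_d}{r} \sum_{k \ge 0} \phi(k) \bigl(k+1 + \tfrac{3r}{2}\bigr)^d.
\end{align*}
For $r \le 1$ the factor $(k+1+3r/2)^d$ is dominated by $(k+5/2)^d$, and a Laplace-type estimate (the integrand peaks near $k \sim \sqrt d$ at value $\sim e^{-d/2} d^{d/2}$) combined with the Stirling asymptotics $\omega_d \sim (2\pi e/d)^{d/2}/\sqrt{\pi d}$ collapses the right-hand side to $C_d/r$ with $C_d = e^{\Theta(d)}$.

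For stage two, given $r > 1$, I write $A_{\bc r} = (A')_{\bc 1}$ with $A' \defn A_{\bc(r-1)}$, which is closed since $A$ is closed and $B(r-1)$ is compact. Applying the $r = 1$ case of stage one to $A'$ yields $\bar\gamma(\del A_{\bc r}) \le C_d$. Together, stages one and two give $\bar\gamma(\del A_{\bc r}) \le \max(C_d, C_d/r)$. The argument for $A_{\bp r}$ is identical, using the $\ell_\infty$-ball portion of Corollary~\ref{cor: lebesgue_rip} and the analogous identity $A_{\bp r} = (A_{\bp(r-1)})_{\bp 1}$. The main obstacle lies in stage one: both $\omega_d$ and the Gaussian moment sum individually behave like $e^{\pm \Theta(d \log d)}$, but when multiplied against the $(2\pi)^{-d/2}$ normalization they collapse to the sharp rate $C_d = e^{\Theta(d)}$, and carefully bookkeeping this cancellation is what drives the claimed dimension dependence. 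A secondary subtlety is that the direct annular estimate degrades as $r^{d-1}$ for large $r$; the semigroup bootstrap is what repairs this and delivers the $r$-independent bound $C_d$ in the large-$r$ regime.
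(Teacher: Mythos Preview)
Your proposal is correct and follows essentially the same route as the paper: radially localize the Gaussian measure, truncate $A$ to the relevant ball, apply Corollary~\ref{cor: lebesgue_rip}, and then handle $r>1$ by writing $A_{\bc r}$ as a $1$-parallel set. The only cosmetic difference is that the paper uses Ball's continuous layer-cake identity $e^{-\|x\|^2/2}=\int_0^\infty \tau e^{-\tau^2/2}\mathbbm{1}\{x\in B(\tau)\}\,d\tau$ (leading to $\int_0^\infty \tau e^{-\tau^2/2}(\tau+3r/2)^d\,d\tau$ and a binomial expansion into Gamma integrals) in place of your discrete annular shells, but the resulting estimates and the $e^{\Theta(d)}$ bookkeeping are the same.
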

\begin{proof}
We first prove the result for sets of the form $A_{\bc r}$. Our proof relies on an observation in Ball~\cite{Ball93} which leads to a (loose) upper bound of $\sqrt d$ on the Gaussian surface area of arbitrary convex sets in $\real^d$. (The tight upper-bound is $\Theta(d^{1/4})$.) The observation is simple:
\begin{align*}
e^{-\frac{\|x\|^2}{2}} = \int_{\tau = 0}^\infty \tau e^{-\tau^2/2} \mathbbm 1\{x \in B(\tau)\} d\tau.
\end{align*}

Using this, we rewrite $\gamma(A_{\bc r+\delta} \setminus A_{\bc r})$ as
\begin{align}
\gamma(A_{\bc r+\delta} \setminus A_{\bc r}) &= \frac{1}{(2\pi)^{d/2}}\int_{A_{\bc r+\delta} \setminus A_{\bc r}} e^{-\|x\|^2/2} dx\\
&= \frac{1}{(2\pi)^{d/2}}\int_{A_{\bc r + \delta} \setminus A_{\bc r}} \int_{\tau=0}^\infty \tau e^{-\tau^2/2} \mathbbm 1\{x \in B(\tau)\} d\tau dx\\
&= \frac{1}{(2\pi)^{d/2}}\int_{\tau=0}^\infty  \tau e^{-\tau^2/2} \left(\int_{A_{\bc r + \delta} \setminus A_{\bc r}}  \mathbbm 1\{x \in B(\tau)\} dx \right) d\tau\\
&= \frac{1}{(2\pi)^{d/2}}\int_{\tau=0}^\infty  \tau e^{-\tau^2/2} \lambda\left( (A_{\bc r + \delta} \setminus A_{\bc r}) \cap B(\tau)\right) d\tau\\
%
%
%
&\stackrel{(a)}= \frac{1}{(2\pi)^{d/2}}\int_{\tau=0}^\infty  \tau e^{-\tau^2/2} \lambda\left( ((A^\tau)_{\bc r+\delta} \setminus (A^\tau)_{\bc r}) \cap B(\tau) \right) d\tau\\
&\leq \frac{1}{(2\pi)^{d/2}}\int_{\tau=0}^\infty  \tau e^{-\tau^2/2} \lambda\left( ((A^\tau)_{\bc r+\delta} \setminus (A^\tau)_{\bc r})  \right) d\tau. \label{eq: A_tau}
\end{align}
In $(a)$, we let $A^\tau \defn A \cap B(\tau+r+\delta)$. Since points in $A \setminus A^\tau$ are more than $r+\delta$ distance away from $B(\tau)$, we may think of $A^\tau$ as the part of $A$ that is relevant to $B(\tau)$. In particular, the set $(A_{\bc r + \delta} \setminus A_{\bc r}) \cap B(\tau)$ is identical to $((A^\tau)_{\bc r + \delta} \setminus (A^\tau)_{\bc r}) \cap B(\tau)$. Now observe that $(A^\tau)$ is a closed set in $B(\tau+r+\delta)$. Using Corollary~\ref{cor: lebesgue_rip}, we may upper-bound $\lambda((A^\tau)_{\bc r + \delta} \setminus (A^\tau)_{\bc r})$ by
\begin{align*}
\lambda((A^\tau)_{\bc r + \delta} \setminus (A^\tau)_{\bc r}) \leq \frac{(\tau+\delta+3r/2)^d}{(r/2)^d} \cdot \left(2^{d-1} \Omega_d \frac{(r+\delta)^d - r^d}{d}\right).
\end{align*}
Substituting into inequality~\eqref{eq: A_tau}, we obtain
\begin{align*}
\gamma(A_{\bc r + \delta} \setminus A_{\bc r}) &\leq \frac{1}{(2\pi)^{d/2}} \left(2^{d-1} \Omega_d \frac{(r+\delta)^d - r^d}{d}\right) \cdot \frac{1}{(r/2)^d}\int_{\tau=0}^\infty   \tau e^{-\tau^2/2} (\tau+\delta+3r/2)^d  d\tau.
\end{align*}
We may expand $(\tau + \delta + 3r/2)^d = \sum_{i=0}^d {d \choose i} \tau^i (\delta + 3r/2)^{d-i}$. Using the closed-form expression
\begin{align*}
\int_{\tau = 0}^\infty e^{-\tau^2/2} \tau^{i+1} d\tau = 2^{i/2} \Gamma(1+ i/2),
\end{align*}
we arrive at
\begin{align*}
\gamma(A_{\bc r + \delta} \setminus A_{\bc r}) &\leq \frac{1}{(2\pi)^{d/2}} \left(2^{d-1} \Omega_d \frac{(r+\delta)^d - r^d}{d}\right) \cdot \frac{1}{(r/2)^d} \sum_{i=0}^d {d \choose i} 2^{i/2} \Gamma(1 + i/2) (3r/2 + \delta)^{d-i}.
\end{align*}
Dividing both sides by $\delta$ and taking the $\limsup$ as $\delta \to 0$,
\begin{align}
\bar \gamma(\del A_{\bc r}) &\leq \frac{1}{(2\pi)^{d/2}} \left(2^{d-1} \Omega_d r^{d-1}\right) \cdot \frac{1}{(r/2)^d} \sum_{i=0}^d {d \choose i} 2^{i/2} \Gamma(1 + i/2) (3r/2)^{d-i}\\
&= \frac{1}{(2\pi)^{d/2}} \left(2^{2d-1} \Omega_d \right) \cdot \sum_{i=0}^d C_i r^{i-1}, \label{eq: r_star}
\end{align}
where $C_i \defn {d \choose i} 2^{(d-i)/2} \Gamma(1 + (d-i)/2) (3/2)^{i}$. For simplicity, let us assume that $r \leq 1$. Note that for $r > 1$, the bound for $r=1$ continues to be valid. This is because any $r$-parallel set $A_{\bc r}$ for $r >1$ is also a $1$-parallel set for some set $\tilde A$. Denote the bound for $r = 1$ by $C$; i.e.,
\begin{align*}
C \defn \frac{1}{(2\pi)^{d/2}} \left(2^{2d-1} \Omega_d \right) \cdot \sum_{i=0}^d C_i.
\end{align*}
For $r \leq 1$, we conclude that
\begin{align*}
\bar \gamma(\del A_{\bc r}) &\leq \frac{1}{(2\pi)^{d/2}} \left(2^{2d-1} \Omega_d \right) \cdot  \sum_{i=0}^d C_i r^{i-1}\\
&\leq \frac{1}{(2\pi)^{d/2}} \left(2^{2d-1} \Omega_d \right) \cdot  \sum_{i=0}^d C_i \cdot \frac{1}{r}\\
&= \frac{C}{r}.
\end{align*}
This leads to our final bound,
\begin{align*}
\bar \gamma(\del A_{\bc r}) \leq \max \left(C, \frac{C}{r} \right). 
\end{align*}
To obtain a rough upper bound on $C$, note that each $C_i$ can be upper-bounded by $2^d \cdot 2^{d/2} \cdot \Gamma(1+d/2)(3/2)^d$, which leads to
\begin{align*}
C &= \frac{2^{2d-1}}{(2\pi)^{d/2}} \frac{d\pi^{d/2}}{\Gamma(1+d/2)} \cdot \sum_{i=0}^d C_i\\
&\le  \frac{2^{2d-1}}{2^{d/2}} \frac{d}{\Gamma(1+d/2)} \cdot d 2^{d/2} \Gamma(1+ d/2) 3^d\\
&= 2^{2d-1} d^2   3^d.
\end{align*}
Moreover, 
\begin{align*}
C &\geq \frac{2^{2d-1}}{(2\pi)^{d/2}} \frac{d\pi^{d/2}}{\Gamma(1+d/2)} \cdot C_0\\
&= \frac{2^{2d-1}}{(2\pi)^{d/2}} \frac{d\pi^{d/2}}{\Gamma(1+d/2)} \cdot 2^{d/2} \Gamma(1+d/2)\\
&= 2^{2d-1}d.
\end{align*}
This gives $C = e^{\Theta(d)}$. We make no claims about the tightness of this bound with regards to the dimension $d$. As we shall note in Remark~\ref{rem: confusion}, there conflicting arguments whether our analysis can be strengthened to derive a bound with $C = {\Theta(1)}$.

The proof for sets of the form $A_{\bp r}$ follows the same steps as above until equation~\ref{eq: A_tau}, but differs in the definition of $(A^\tau)$, which is now defined as $(A^\tau) \defn A \cap B(\tau+ \sqrt d(r+\delta))$. Using Corollary~\ref{cor: lebesgue_rip}, we may upper-bound $\lambda((A^\tau)_{\bp r + \delta} \setminus (A^\tau)_{\bp r})$ by
\begin{align*}
\lambda((A^\tau)_{\bp r + \delta} \setminus (A^\tau)_{\bp r}) \leq \frac{\omega_d( \tau + \sqrt d(3r/2 + \delta))^d}{r^d} \cdot 2^{2d-1} ((r+\delta)^d - r^d).
\end{align*}
Following the steps above, we arrive at the analogue of equation~\eqref{eq: r_star}:
\begin{align}
\gammacsup(\del A_{\bp r}) &\leq \frac{1}{(2\pi)^{d/2}} \left(2^{2d-1} \omega_d d \right) \cdot \sum_{i=0}^d C_i r^{i-1}, \label{eq: r_star_box}
\end{align}
where $C_i = {d \choose i} 2^{(d-i)/2} \Gamma(1 + (d-i)/2) (\sqrt d)^i (3/2)^i$.

Setting $C \defn \frac{1}{(2\pi)^{d/2}} \left(2^{2d-1} \omega_d d \right) \cdot \sum_{i=0}^d C_i$, it is clear that 
\begin{align*}
\gammacsup(\del A_{\bp r}) \leq \max \left( C, \frac{C}{r} \right).
\end{align*}
The only thing left to check is the dependence of $C$ on the dimension $d$. Using the approximation $\Gamma(k+1) \approx e^{k\log k + \Theta(k)}$, we see that
\begin{align*}
C &= \frac{1}{(2\pi)^{d/2}} \left(2^{2d-1} \omega_d d \right) \cdot \sum_{i=0}^d C_i\\
&= \frac{e^{\Theta(d)}}{d^{d/2}} \sum_{i=0}^d (d-i)^{(d-i)/2} d^{i/2}\\
&= e^{\Theta(d)} \sum_{i=0}^d \left(\frac{i}{d}\right)^{i/2}\\
&= e^{\Theta(d)}.
\end{align*}
\end{proof}

\begin{remark}
The $1/r$ dependence of the Gaussian surface area upper bound is optimal. To see this, let the set $S^r$ be a maximal packing arrangement of radius-$r$ balls in $B_d(1)$, and let $N^r$ be the number of balls that are packed. Clearly, the Gaussian surface area of $S_r$ is at most $N^r \Omega_d r^{d-1} \frac{1}{(2\pi)^{d/2}} e^{-1/2} = C_d M^r r^{d-1}$, where $C_d$ is a dimension-dependent constant. It is now easy to check that $N^r = \Omega(1/r^{d})$, so that $\gamma(\del S_r) = \Omega(1/r)$. A similar argument works for $\gammacsup(\del A_{\bp r})$ as well.
\end{remark}

\begin{remark}
We fixed $r \le 1$ in our proof, which may seem arbitrary. Indeed, a tighter bound on the constant $C$ can be obtained if $r \leq r^*$, where $r^*$ minimizes the bound in expression~\eqref{eq: r_star}. However, by examining the coefficient $C_1$, it may be verified that the tighter constant is still $e^{\Theta(d)}$.
\end{remark}


\begin{remark}\label{rem: sigma}
For the scaled Gaussian distribution $\cN(0, \sigma^2 I_d)$, the Gaussian surface area upper bound changes to $\max\left(\frac{C}{\sigma}, \frac{C}{r} \right)$, where $C$ is as in Theorem~\ref{thm: gaussian_rip}. 
\end{remark}

\begin{remark}\label{rem: mixture}
Theorem~\ref{thm: gaussian_rip} also holds for distributions that have been smoothed by convolving with the Gaussian density. Specifically, if $\mu = \tilde \mu \star \cN(0, \sigma^2 I_d)$ then the $\mu$-surface area of $r$-parallel sets is upper-bounded by $\max\left(\frac{C}{\sigma}, \frac{C}{r} \right)$.
\end{remark}


\section{Reverse Brunn-Minkowski and entropy power inequalities}\label{sec: rev_bmi_epi}

The Brunn-Minkowski inequality~\cite{Gar02} provides a lower bound on the volume of the Minkowski sum of two measurable sets $K$ and $L$ in terms of the volumes of $K$ and $L$. The lower bound is as follows:
\begin{align*}
\lambda(K \oplus L)^{1/d} \geq \lambda(K)^{1/d} + \lambda(L)^{1/d},
\end{align*}
with equality if and only if $K$ and $L$ are homothetic convex bodies from which sets of measure zero have been removed. In general, it is not possible to upper bound $\lambda(K \oplus L)$ in terms of $\lambda(K)$ and $\lambda(L)$. It is easy to construct examples where $\lambda(K) = \lambda(L) = 0$, but $\lambda(K \oplus L)$ is arbitrarily large -- even when $K$ and $L$ are convex sets. Under the convexity assumption, Milman's reverse Brunn-Minkowski inequality~\cite{Mil86, MilPaj00} shows the existence of volume-preserving linear transformations $T_1$ and $T_2$ such that 
\begin{align*}
\lambda(T_1K \oplus T_2 L)^{1/d} \leq C(\lambda(K)^{1/d} + \lambda(L)^{1/d}),
\end{align*}
where $C$ is a dimension-independent constant. Milman's reverse Brunn-Minkowski inequality is a deep result in convex geometry and the local theory of Banach spaces~\cite{Pis99}. 

The entropy power inequality~\cite{Bla65} is the information theoretic analogue of the Bruno-Minkowski inequality. For independent random variables $X$ and $Y$ on $\real^d$ with well-defined differential entropies, the entropy power inequality asserts that
\begin{align*}
e^{2h(X+Y) \over d} \geq e^{2h(X) \over d} + e^{2h(Y) \over d}, 
\end{align*}
with equality if and only if $X$ and $Y$ are Gaussian with proportional covariance matrices. Like the Brunn-Minkowski inequality, the entropy power inequality cannot be reversed in general. Indeed, we may construct examples where $h(X) = h(Y) = -\infty$, but $h(X+Y)$ is arbitrarily large. Inspired by Milman's reverse Brunn-Minkowski inequality, Bobkov and Madiman~\cite{BobMad12} established a reverse entropy power inequality for log-concave random variables\footnote{Bobkov and Madiman proved a more general result for convex (also called hyperbolic) measures, of which log-concave measures comprise a special case.}. If $X$ and $Y$ are independent log-concave random variables, then there exist volume (and therefore entropy) preserving linear transformations $T_1$ and $T_2$ such that 
\begin{align*}
h(T_1X+ T_2Y) \leq C( h(X) + h(Y)),
\end{align*}
where $C$ is a dimension-independent constant. 

As noted above, one of the reasons the Brunn-Minkowski inequality and the entropy power inequality cannot be reversed in general is the ability to use sets of zero volume and random variables with $-\infty$ entropies to construct examples that demonstrate the futility of such a reversal. A natural fix would be to impose some regularity conditions that rule out these problematic examples. In this section, we show that the class of $r$-parallel sets satisfies a version of the reverse Burn-Minkowski inequality, and the class of $r$-smoothed measures (measures obtained by convolving an arbitrary measure with a scaled-standard normal measure) satisfies a version of the reverse entropy power inequality. We will detail results for parallel sets of the form $A_{\bc r}$, which shall be denoted by $A_r$ for simplicity. Analogous results for parallel sets of the form $A_{\bp r}$ may be derived by making minor changes to the proofs presented here. We note that the inequalities presented in this section are largely a consequence of the inherent regularity of parallel sets and smoothed distributions, which is contrast to the more fundamental results in Milman~\cite{Mil86} and Bobkov and Madiman~\cite{BobMad12}.

\subsection{Upper bounds for $\lambda(K_r \oplus L_r)$}
 
\begin{theorem}\label{thm: rev_bmi}
For $r>0$, let $K_r$ and $L_r$ be arbitrary $r$-parallel sets in $\real^d$. Then the following inequality holds:
\begin{align*}
\lambda(K_r \oplus L_r) \leq \lambda(K_r) \lambda(L_r) C(d, r),
\end{align*}
where $C(d, r) = {2^{4d} \over \omega_d r^d}$.
\end{theorem}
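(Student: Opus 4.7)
The plan is to exploit the identity $K_r \oplus L_r = (K \oplus L) \oplus 2rB$, which says that the Minkowski sum of two $r$-parallel sets is itself a $2r$-parallel set. This immediately suggests approximating the underlying sets $K$ and $L$ by finite point sets (their maximal $r$-packings), reducing the problem to counting balls, and then invoking the volume-vs-packing-number inequality already used in the proof of Theorem~\ref{thm: lebesgue_rip_volume}.

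More precisely, I would begin by selecting maximal $r$-packings $\widehat K = \{\widehat k_1, \dots, \widehat k_m\} \subseteq K$ and $\widehat L = \{\widehat \ell_1, \dots, \widehat \ell_n\} \subseteq L$, where $m = N_B(K;r)$ and $n = N_B(L;r)$. By maximality, every point of $K$ lies within distance $r$ of some $\widehat k_i$, so $K \subseteq \widehat K \oplus rB$, and similarly $L \subseteq \widehat L \oplus rB$. Combining,
\begin{align*}
K_r \oplus L_r = K \oplus L \oplus 2rB \subseteq \widehat K \oplus \widehat L \oplus 4rB.
\end{align*}
The set $\widehat K \oplus \widehat L$ consists of at most $mn$ points, so a trivial union bound gives
\begin{align*}
\lambda(K_r \oplus L_r) \leq mn \cdot \omega_d (4r)^d.
\end{align*}

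Next I would bound $m$ and $n$ in terms of $\lambda(K_r)$ and $\lambda(L_r)$, exactly as in the last step of the proof of Theorem~\ref{thm: lebesgue_rip_volume}: the $r$-separated balls $B(\widehat k_i; r/2)$ are pairwise disjoint and contained in $K \oplus (r/2)B \subseteq K_r$, hence
\begin{align*}
m \leq \frac{\lambda(K_r)}{\omega_d (r/2)^d} = \frac{2^d \lambda(K_r)}{\omega_d r^d},
\end{align*}
and likewise $n \leq 2^d \lambda(L_r)/(\omega_d r^d)$. Multiplying these bounds together and substituting into the previous display yields
\begin{align*}
\lambda(K_r \oplus L_r) \leq \frac{2^{2d}\lambda(K_r)\lambda(L_r)}{\omega_d^2 r^{2d}} \cdot \omega_d (4r)^d = \frac{2^{4d}}{\omega_d r^d}\,\lambda(K_r)\lambda(L_r),
\end{align*}
which is exactly the advertised constant.

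The argument is essentially self-contained and I do not foresee a genuine obstacle; the only minor wrinkle is that maximal packings require $K$ and $L$ to be bounded (or at least the packing numbers to be finite). If $\lambda(K_r), \lambda(L_r) < \infty$ then the packing numbers are automatically finite by the very inequality used in the previous paragraph, so one can either carry out the argument directly for bounded $K, L$ and extend by monotone approximation (intersecting with large balls and using the continuity of $t \mapsto \lambda((K \cap B(t))_r)$ from Stacho~\cite{Sta76}), or simply note that only finite packings of $K$ and $L$ ever enter the bound. Either way, the core of the proof is the one-line observation that $K_r \oplus L_r$ is a $2r$-parallel set of a finite union of points whose cardinality is controlled by $\lambda(K_r)\lambda(L_r)/r^{2d}$.
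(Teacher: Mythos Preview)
Your proof is correct and follows essentially the same route as the paper: take maximal $r$-packings $\widehat K,\widehat L$ of $K,L$, observe that $K_r\oplus L_r$ is contained in the union of $|\widehat K|\cdot|\widehat L|$ balls of radius $4r$, and bound each packing number by $2^d\lambda(\cdot)/(\omega_d r^d)$. The only cosmetic difference is that the paper first reduces to finite $K,L$ by a continuity argument and then packs, whereas you pack the general sets directly using the covering property of a maximal packing; your version is slightly more streamlined but not substantively different.
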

\begin{proof}
It is enough to prove the result for $K_r, L_r$ being finite unions of balls, since the general result will follow via a continuity argument as in the proof of Theorem~\ref{thm: lebesgue_rip_volume}. Let $K = \{x_i, i \in [N]\}$ and $L = \{y_j, j \in [M] \}$. Consider a maximal $r$-packing $\{\widehat x_i, i \in [\widehat N]\}$ of $K$ and a maximal $r$-packing $\{\widehat y_j, j \in [\widehat M]\}$ of $L$. For $i \in [\widehat N]$ and $j \in [\widehat M]$, define
\begin{align*}
&(\widehat K)^i_r = \cup_{d(x_k, \widehat x_i) \leq r} B(x_k; r), \quad{ and }\\
&(\widehat L)^j_r = \cup_{d(y_\ell, \widehat y_j) \leq r} B(y_\ell; r).
\end{align*}
Clearly,
\begin{align*}
K_r \oplus L_r = \cup_{i \in [\widehat N], j \in [\widehat M]} (\widehat K)^i_r \oplus (\widehat L)^j_r.
\end{align*}
Since $(\widehat K)^i_r \subseteq B(\widehat x_i; 2r)$, and $(\widehat L)^j_r \subseteq B(\widehat y_j; 2r)$, we have 
\begin{align*}
\lambda(K_r \oplus L_r) &\leq \sum_{i \in [\widehat N], j \in [\widehat M]} \lambda \left( (\widehat K)^i_r \oplus (\widehat L)^j_r \right)\\
&\leq \widehat N \widehat M \lambda(B(\widehat x_i + \widehat y_j; 4r))\\
&\leq \frac{\lambda(K_r)}{\omega_d (r/2)^d} \cdot \frac{\lambda(L_r)}{\omega_d (r/2)^d}\cdot \omega_d (4r)^d\\
&= \lambda(K_r) \lambda(L_r) \cdot \frac{2^{4d}}{\omega_d r^d}.
\end{align*}
\begin{remark}
If $K_r$ and $L_r$ are indeed unions of balls such that the balls $B(x_i+y_j; 2r)$ for $i \in [N]$ and $j \in M$ are disjoint, then it is easy to see that 
\begin{align*}
\lambda(K_r \oplus L_r) = N \cdot M \cdot \omega_d (2r)^d = \lambda(K_r) \lambda(L_r) \cdot \frac{2^{d}}{\omega_d r^d}.
\end{align*}
Thus, the constant $C(d,r)$ has the correct dependence on $r$ and the dependence on $d$ is essentially tight. 
\end{remark}

\end{proof}

\subsection{Upper bounds for $h(X_r + Y_r)$}
Theorem~\ref{thm: rev_bmi} begs the question whether an analogous inequality for entropy holds in information theory. We answer the question in the affirmative. Recall that an $r$-smooth random variable is obtained by convolving the distribution of an arbitrary random variable with the Gaussian distribution $\cN(0, r I)$.
\begin{theorem}\label{thm: rev_epi}
Let $r >0$ and $X_r$ and $Y_r$ be independent $r$-smooth random variables in $\real^d$ with well-defined entropies $h(X_r)$ and $h(Y_r)$ and finite second moments. Then the following inequality holds:
\begin{align*}
h(X_r \oplus Y_r) \leq h(X_r) + h(Y_r) + C(d, r),
\end{align*}
where $C(d, r) = - \frac{d}{2}\log (\pi r)$.
\end{theorem}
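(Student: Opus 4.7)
}

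The plan is to recast the inequality as a lower bound on a conditional entropy and then exploit an orthogonal rotation of the Gaussian noise. Since the linear change of variables $(X_r, Y_r) \mapsto (Y_r, X_r + Y_r)$ has unit Jacobian, the independence of $X_r$ and $Y_r$ gives $h(Y_r, X_r + Y_r) = h(X_r, Y_r) = h(X_r) + h(Y_r)$, which rearranges to the identity
\begin{align*}
h(X_r) + h(Y_r) - h(X_r + Y_r) = h(Y_r \mid X_r + Y_r).
\end{align*}
Hence the theorem is equivalent to the conditional-entropy bound $h(Y_r \mid X_r + Y_r) \ge \tfrac{d}{2}\log(\pi r)$, which is what I would aim to prove.

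Write $X_r = X + \sqrt{r}\, Z_1$ and $Y_r = Y + \sqrt{r}\, Z_2$ with $Z_1, Z_2$ independent $\cN(0, I_d)$ vectors that are also independent of $X$ and $Y$. The key step is to rotate the noise: set $Z_+ = (Z_1 + Z_2)/\sqrt{2}$ and $\tilde Z = (Z_2 - Z_1)/\sqrt{2}$, so that $Z_+, \tilde Z$ are again independent $\cN(0, I_d)$ vectors, and introduce $S = X_r + Y_r$ and $V = Y_r - X_r$. A direct computation yields
\begin{align*}
S = (X + Y) + \sqrt{2r}\, Z_+, \qquad V = (Y - X) + \sqrt{2r}\, \tilde Z.
\end{align*}
The crucial observation is that, conditional on $(X, Y)$, the vectors $S$ and $V$ are independent, since one is driven purely by $Z_+$ and the other purely by $\tilde Z$, and $X, Y$ are independent of both.

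The estimate then follows in a few lines. By the non-negativity of conditional mutual information and the conditional independence just noted, $h(V \mid S) \ge h(V \mid S, X, Y) = h(V \mid X, Y)$. Given $(X, Y) = (x, y)$, $V$ is Gaussian with covariance $2r\, I_d$, so $h(V \mid X, Y) = \tfrac{d}{2}\log(4\pi e r)$. Since $V = 2 Y_r - S$, for each fixed $S = s$ a linear change of variables gives $h(V \mid S) = h(Y_r \mid S) + d\log 2$. Combining these facts,
\begin{align*}
h(Y_r \mid X_r + Y_r) \ge \tfrac{d}{2}\log(4\pi e r) - d\log 2 = \tfrac{d}{2}\log(\pi e r) \ge \tfrac{d}{2}\log(\pi r),
\end{align*}
which, together with the opening identity, is the claim (and in fact gives the slightly sharper constant $-\tfrac{d}{2}\log(\pi e r)$).

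The real difficulty is locating the right viewpoint: once one sees that $(X_r, Y_r) \mapsto (X_r + Y_r, Y_r - X_r)$ splits the joint Gaussian noise into independent orthogonal pieces, the bound drops out of a short conditional-entropy manipulation, without any appeal to deeper results such as the reverse Brunn--Minkowski inequality of Theorem~\ref{thm: rev_bmi} or Fisher-information estimates. The finite second-moment and well-defined-entropy hypotheses ensure that all joint and conditional entropies above exist and satisfy the standard chain-rule identities used, so no serious regularity issues arise.
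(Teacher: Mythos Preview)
Your proof is correct and takes a genuinely different route from the paper's.

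The paper first reduces to the case where $X$ and $Y$ are finitely supported (invoking a weak-convergence continuity result of Geng and Nair for the general case), and then proves a pointwise density inequality: for Gaussian densities $g_r$ one has $g_{2r}(a+b) \ge (\pi r)^{d/2} g_r(a) g_r(b)$, which for mixtures $p,q$ with common component variance $r$ gives $(p\star q)(t) \ge (\pi r)^{d/2} p(\tau) q(t-\tau)$ for every $t,\tau$. This is then inserted into $-h(p\star q) = \iint p(\tau)q(t-\tau)\log(p\star q)(t)\,d\tau\,dt$ to obtain the bound.

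Your conditional-entropy argument is both shorter and more robust: it needs no discretisation or limiting step, no pointwise control of densities, and only the rotational invariance of the Gaussian together with $I(V;X,Y\mid S)\ge 0$. More strikingly, your computation actually delivers the sharper constant $C(d,r) = -\tfrac{d}{2}\log(\pi e r)$ rather than $-\tfrac{d}{2}\log(\pi r)$. The paper observes in the remark following its proof that well-separated discrete $X,Y$ give $h(X_r+Y_r)\approx h(X_r)+h(Y_r)-\tfrac{d}{2}\log(\pi e r)$, and explicitly poses the inequality with this improved constant as Open Problem~3 in Section~\ref{sec: end}. Your argument resolves that open problem and shows the constant $-\tfrac{d}{2}\log(\pi e r)$ is tight.
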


We make a few remarks before proving Theorem~\ref{thm: rev_epi}. Suppose we write $X_r+Y_r$ as $X_r + Y+ \sqrt r Z_2$ (or $X + Y_r+ \sqrt r Z_1$) where $Z_1, Z_2 \sim \cN(0, I)$ are such that $(X, Y, Z_1, Z_2)$ are mutually independent. One way to bound the entropy of $X_r+Y_r$ is by using the concavity of entropy along the heat equation~\cite{Cos85}:
\begin{align*}
h(X_r +Y + \sqrt r Z_2) \leq h(X_r +Y) + \frac{J(X_r+Y)}{2} \cdot r.
\end{align*}
One may further upper bound the Fisher information term using $J(X_r + Y) \leq J(\sqrt r Z_2) = d/r$. The interesting aspect of Theorem~\ref{thm: rev_epi} is bounding $h(X_r+Y_r)$ by entropies of $X_r$ and $Y_r$, which requires using the smoothness properties of $X_r$ and $Y_r$. Our proof is inspired by a proof in Bobkov and Marsiglietti~\cite{BobMar20}: Lemma 5.1 in~\cite{BobMar20} (which has also previously appeared in Wang and Madiman~\cite{WanMad14} and Melbourne, Talukdar, Bhaban, Madiman, and Salapaka~\cite{MelEtal18}) states that if $X$ is a discrete random variable and $Y$ is a continuous random variable, then 
\begin{align*}
h(X+Y) \leq H(X) + h(Y).
\end{align*}
\begin{proof}
We prove the result for when $X$ and $Y$ are supported on a finite set of points. If $X$ and $Y$ are continuous, we may consider a sequence of discrete distributions $\widehat X_i \stackrel{d} \to X$ and $\widehat Y_i \stackrel{d} \to Y$. The convergence of $h(\widehat X_i + \sqrt r Z_1) \to h(X + \sqrt r Z_1)$ (and same for the $\widehat Y_i$ sequence) can be concluded using Geng and Nair~\cite[Proposition 18]{GengNair14}.

Denote the standard normal distribution $\cN(0, I)$ by $g(\cdot)$, and $\cN(0, rI)$ by $g_r(\cdot)$. Let $\supp(X) = \{x_1, \dots, x_N\}$ and $\supp(Y) = \{y_1, \dots, y_M\}$ for $N, M \geq 1$. We shall use the shorthand $p_i \defn \prob(X = x_i)$ and $q_j \defn \prob(Y = y_j)$. The distributions of $X_r$ and $Y_r$, denoted by $p$ and $q$, are given by
\begin{align*}
p(x) &= \sum_{i=1}^N p_i g_r(x-x_i), \quad \text{ and}\\
q(y) &= \sum_{j=1}^M q_j g_r(y-y_j).
\end{align*}

The main lemma we use for this proof is the following:
\begin{lemma}
$(p\star q)(t) \geq e^{-C(d, r)} p(\tau)q(t-\tau)$ for every $t, \tau \in \real^d$, where $C(d, r) = - \frac{d}{2}\log (\pi r).$
\end{lemma}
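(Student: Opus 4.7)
The plan is to reduce the inequality to a pointwise Gaussian estimate. Expanding the densities, write
\begin{align*}
(p \star q)(t) &= \sum_{i,j} p_i q_j \int g_r(s - x_i)\, g_r(t - s - y_j)\, ds = \sum_{i,j} p_i q_j\, g_{2r}(t - x_i - y_j),
\end{align*}
using the semigroup identity $g_r \star g_r = g_{2r}$. Similarly,
\begin{align*}
p(\tau)\, q(t - \tau) = \sum_{i,j} p_i q_j\, g_r(\tau - x_i)\, g_r(t - \tau - y_j).
\end{align*}
Since the weights $p_i q_j$ are nonnegative and the index sets match, it suffices to prove the pointwise bound $g_{2r}(a + b) \ge (\pi r)^{d/2}\, g_r(a)\, g_r(b)$ for all $a, b \in \real^d$, applied with $a = \tau - x_i$ and $b = t - \tau - y_j$ (which satisfy $a + b = t - x_i - y_j$).

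The pointwise bound is elementary. The ratio of normalizing constants is $(2\pi r)^d / (4\pi r)^{d/2} = (\pi r)^{d/2}$, so the claim reduces to the exponent comparison
\begin{align*}
\frac{\|a\|^2 + \|b\|^2}{2r} \;\ge\; \frac{\|a + b\|^2}{4r},
\end{align*}
which is exactly the parallelogram law $\|a\|^2 + \|b\|^2 \ge \tfrac12 \|a+b\|^2$ (equivalently, convexity of $x \mapsto \|x\|^2$ applied to the midpoint of $a$ and $-b$). Exponentiating and multiplying by the constant $(\pi r)^{d/2}$ yields the desired pointwise inequality.

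Summing the pointwise estimate against the nonnegative weights $p_i q_j$ gives $(p \star q)(t) \ge (\pi r)^{d/2}\, p(\tau)\, q(t - \tau) = e^{-C(d,r)}\, p(\tau)\, q(t-\tau)$, proving the lemma. There is no real obstacle: the content is entirely in (i) recognizing that the Gaussian convolution semigroup reduces everything to a single pointwise comparison of an $\cN(0, 2rI)$ density to a product of two $\cN(0, rI)$ densities, and (ii) invoking the parallelogram law to handle the exponent. The constant $(\pi r)^{d/2}$ arises precisely from the mismatch in the normalizing constants, which explains the form $C(d, r) = -\tfrac{d}{2}\log(\pi r)$.
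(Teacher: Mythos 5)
Your proof is correct and follows essentially the same route as the paper: expand both sides via the Gaussian semigroup identity, reduce to the pointwise comparison $g_{2r}(a+b) \ge (\pi r)^{d/2} g_r(a) g_r(b)$, and observe that the exponent discrepancy is a nonnegative quadratic form. The paper records this last step as the ratio $(\pi r)^{d/2}\exp(\|a-b\|^2/4r) \ge (\pi r)^{d/2}$, while you phrase it as the parallelogram-law inequality $\|a\|^2+\|b\|^2 \ge \tfrac12\|a+b\|^2$; these are the same estimate.
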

\begin{proof}
The distribution of $X_r+Y_r$ is given by the $p \star q$, which is given by
\begin{align*}
(p \star q)(t) &= \sum_{i=1}^N \sum_{j=1}^M p_i q_j \left(g_r(x-x_i) \star g_r(y-y_r)\right)(t)\\
&= \sum_{i=1}^N \sum_{j=1}^M p_i q_j g_{2r}(t-(x_i+y_j)).
\end{align*}
For a fixed $\tau \in \real^d$, the product $p(\tau)q(t-\tau)$ is given by
\begin{align*}
p(\tau)q(t-\tau) &= \sum_{i=1}^N p_i g_r(\tau-x_i) \cdot \sum_{j=1}^M q_j g_r(t-\tau-y_j)\\
&= \sum_{i=1}^N \sum_{j=1}^M p_i q_j g_r(\tau-x_i)g_r(t-\tau-y_j).
\end{align*}
Thus, it is necessary and sufficient to prove that for any $a, b \in \real^d$
\begin{align*}
g_{2r}(a+b) \geq e^{-C(d, r)} g_r(a)g_r(b).
\end{align*}
Observe that
\begin{align*}
\frac{g_{2r}(a+b)}{g_r(a)g_r(b)} &= (\pi r)^{d/2} \exp \left( \frac{a^2-2ab+b^2}{4r}\right)\\
&\geq (\pi r)^{d/2}\\
&= e^{-C(d, r)},
\end{align*}
where $C(d, r) = - \frac{d}{2}\log (\pi r).$
\end{proof}

With the above lemma in hand, we conclude the proof as follows:
\begin{align*}
- h(p \star q) &= \int_{\real^d} (p\star q)(t) \log (p\star q)(t) dt\\
%
%
&= \int_{\real^d \times \real^d} p(\tau)q(t-\tau) \log ((p\star q)(t)) d\tau dt\\
&\geq \int_{\real^d \times \real^d} p(\tau)q(t-\tau) \log (e^{-C(d, r)} p(\tau)q(t-\tau)) d\tau dt\\
&= - h(p) - h(q) - C(d, r).
\end{align*}
This proves the theorem.
\end{proof}

\begin{remark}
Notice that the upper bound becomes larger for smaller values of $r$. It is also not hard to see that the dependence of $C(d, r)$ on $d$ and $r$ is essentially tight. In the above proof, if the points $\{x_i\}_{i \in [N]}$ and $\{y_j\}_{j \in [M]}$ are all far away from each other, then 
\begin{align*}
h(X_r) &\approx H(p_1, \dots, p_k) + \frac{d}{2} \log (2\pi e r),\\
h(Y_r) &\approx H(q_1, \dots, q_k) + \frac{d}{2} \log (2\pi e r), \quad \text{ and}\\
h(X_r + Y_r) &\approx H(p_1, \dots, p_k)+H(q_1, \dots, q_k)+\frac{d}{2} \log(4\pi e r).
\end{align*}
This gives
\begin{align*}
h(X_r + Y_r) \approx h(X_r) + h(Y_r) - \frac{d}{2} \log (\pi e r).
\end{align*}

\end{remark}


\section{Applications to machine learning}\label{sec: ml}

We now describe applications of the preceding results to problems in machine learning. The results in this section are applicable to parallel sets with respect to $B$ and $C$. To simplify notation, we state our results using the notation $A_r$ which may be replaced by $A_{\bc r}$ or $A_{\bp r}$ as desired.

\subsection{Computational complexity for learning parallel sets}

The sample complexity of learning a class $\cC$ of Boolean functions on $\real^d$ under an unknown distribution is characterized by the Vapnik-Chervonenkis (VC) dimension of $\cC$~\cite{ShaBen14}. It is not hard to check that the VC dimension of indicator functions on $r$-parallel sets is infinite (even if we consider ``bounded'' $r$-parallel sets within a ball $B(R)$ for $R>r$), so this class is not learnable without making some assumptions on the data distribution. Klivans et al.~\cite{KliEtal08} suggested the Gaussian data distribution as a natural setting in which to study the learnability of indicator functions of subsets of $\real^d$. In particular, the authors examined the computational complexity of learning such functions for a variety of subsets, such as halfspaces, convex sets, Euclidean balls, and intersections of halfspaces. The authors proposed the Gaussian surface area of a set as a useful ``complexity measure'' for determining the difficulty of learning, and provided three reasons for doing so: (1) Every measurable set can be assigned a complexity measure; (2) it is a natural geometric notion; and (3) sets with ``wiggly'' boundaries are harder to learn, which is captured by their larger Gaussian surface area. The main result from Klivans et al.\ is as follows:

\begin{theorem}[Theorems 9, 10, and 15 from Klivans et al.~\cite{KliEtal08}]\label{thm: klivans}
Let $\cC$ be a class of measurable sets in $\real^d$ such that the Gaussian surface area of all sets in $\cC$ is upper-bounded by $S$. We shall denote the set of indicator functions of sets in $\cC$ by $\cC$, as well. Under the standard Gaussian distribution, the following results hold for learning $\cC$ up to an accuracy of $\epsilon$ and confidence of $1-\delta$:
\begin{enumerate}
\item
\textbf{Agnostic learning:} There exists an algorithm that runs in time $\text{poly}\left(d^{O(S^2/\epsilon^4)}, \frac{1}{\epsilon}, \log \frac{1}{\delta} \right)$ and agnostically learns $\cC$.
\item
\textbf{PAC learning:} There exists an algorithm that runs in time $\text{poly}\left(d^{O(S^2/\epsilon^2)}, \frac{1}{\epsilon}, \log \frac{1}{\delta} \right)$ and PAC learns $\cC$.
\end{enumerate}
\end{theorem}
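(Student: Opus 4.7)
The plan is to follow the Gaussian Low-Degree Algorithm paradigm of Klivans, O'Donnell, and Servedio. The argument has three conceptual steps: (i) turn the Gaussian surface area bound into a noise-sensitivity bound under the Ornstein--Uhlenbeck semigroup; (ii) turn the noise-sensitivity bound into concentration of the Hermite expansion on low-degree coefficients; and (iii) learn by polynomial regression against the low-degree Hermite basis.

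For step (i), I would fix $\rho \in (0,1)$ and consider the noise sensitivity $\mathrm{NS}_\rho(f) := \tfrac12 \Pr[f(X) \neq f(Y)]$ for a $\rho$-correlated Gaussian pair $(X,Y)$. The key analytic estimate is that if $A$ has Gaussian surface area at most $S$, then $\mathrm{NS}_\rho(\mathbbm 1_A) \leq c\, S\, \sqrt{1-\rho}$ as $\rho \to 1$. This is proved via the heat-flow identity $\tfrac{d}{dt}\|T_\rho f\|_2^2 = -\|\nabla T_\rho f\|_2^2$ together with a co-area representation of $\|\nabla T_\rho f\|_2^2$ as a Gaussian-weighted integral of the surface areas of level sets of $T_\rho f$, which are controlled by $S$.

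For step (ii), since the Hermite polynomials $H_\alpha$ diagonalize $T_\rho$ with eigenvalues $\rho^{|\alpha|}$, noise sensitivity admits the expansion
\begin{align*}
\mathrm{NS}_\rho(f) \;=\; \tfrac12 \sum_\alpha \bigl(1 - \rho^{|\alpha|}\bigr)\, \widehat f(\alpha)^2.
\end{align*}
Setting $\rho = 1 - 1/k$ and combining with step (i) yields the tail bound $\sum_{|\alpha|>k} \widehat f(\alpha)^2 \leq c'\, S/\sqrt{k}$. To achieve $L^2$-error $\epsilon^2$, which suffices in the PAC setting, one takes $k = O(S^2/\epsilon^2)$. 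The agnostic setting instead requires an $L^1$-approximation of order $\epsilon$, which (via Cauchy--Schwarz) forces an $L^2$-error of order $\epsilon^2$ and hence degree $k = O(S^2/\epsilon^4)$.

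For step (iii), the algorithm estimates each of the $d^{O(k)}$ Hermite coefficients of degree at most $k$ from samples (by empirical Gaussian inner products in the PAC case, and by $L^1$-polynomial regression in the agnostic case), forms $\widetilde f = \sum_{|\alpha|\le k}\widetilde{\widehat f}(\alpha)\, H_\alpha$, and outputs the hypothesis $\mathrm{sign}(\widetilde f - 1/2)$. A standard Hoeffding and union bound argument yields total runtime $\mathrm{poly}(d^k, 1/\epsilon, \log(1/\delta))$ with the claimed exponents. The hardest part will be step (i): the $\limsup$ definition of Gaussian surface area does not a priori provide an integrable boundary measure against which the co-area formula can be applied, so one must either approximate $A$ by sets with smooth boundary (e.g.\ via convolution with a small Gaussian) or justify the heat-flow bound directly using the parallel-set volume estimates of Theorem~\ref{thm: lebesgue_rip_volume} and Theorem~\ref{thm: gaussian_rip}.
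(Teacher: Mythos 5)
The paper does not prove this result---it is quoted verbatim from Klivans, O'Donnell, and Servedio~\cite{KliEtal08} (their Theorems~9, 10, and~15), so there is no in-paper proof for your reconstruction to be measured against. That said, your outline does track the actual KOS argument: surface area controls Gaussian noise sensitivity, noise sensitivity controls the Hermite tail, and the low-degree (resp.\ $L^1$-regression) algorithm then gives the PAC (resp.\ agnostic) guarantee with exponent $S^2/\epsilon^2$ (resp.\ $S^2/\epsilon^4$). Two small but real slips in the bookkeeping are worth fixing. First, with your stated convention $\mathrm{NS}_\rho(f) = \tfrac12 \Pr[f(X)\neq f(Y)]$ the Hermite expansion is $\mathrm{NS}_\rho(f) = \sum_\alpha (1-\rho^{|\alpha|})\,\widehat f(\alpha)^2$, not $\tfrac12\sum_\alpha(\cdot)$; the factor of $\tfrac12$ is already absorbed by the definition. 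Second, your PAC degree calculation contradicts itself: asking for ``$L^2$-error $\epsilon^2$'' with the tail bound $c'S/\sqrt k$ would force $k = O(S^2/\epsilon^4)$, which is the agnostic exponent. What PAC actually needs is Hermite tail mass $\sum_{|\alpha|>k}\widehat f(\alpha)^2 \le \epsilon$, which (after thresholding, $\Pr[\mathrm{err}]\le 4\,\|f-p\|_2^2$) already gives misclassification rate $O(\epsilon)$ and hence $k=O(S^2/\epsilon^2)$; only the agnostic case, via $L^1$-regression plus Cauchy--Schwarz, needs tail mass $\epsilon^2$. Finally, your step (i) via the heat-flow identity and co-area is not how KOS actually prove their Theorem~15 (they use a direct coupling/geometric argument relating the probability that a $\rho$-correlated pair straddles $\partial A$ to the Minkowski content of $\partial A$), and your own concluding caveat correctly identifies why the heat-flow route is not automatic for sets with only a $\limsup$-type surface-area bound. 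For the purposes of this paper that subtlety is moot---the result is imported as a black box---but if you intend your sketch to stand as a proof, step~(i) needs to be replaced by, or reduced to, the KOS coupling estimate.
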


An example of an application of the above result is for learning convex sets: Klivans et al.\ showed that, the class of all convex sets---despite having infinite VC dimension---is efficiently learnable under the Gaussian distribution, by exploiting the fact that the Gaussian surface areas of convex sets in $\real^d$ is bounded above by $\Theta(d^{1/4})$. If one is able to bound the Gaussian surface areas of sets in $\cC$, then Theorem~\ref{thm: klivans} may be directly applied to bound the computational complexity of learning $\cC$. Since Theorem~\ref{thm: gaussian_rip} provides bounds on the Gaussian surface areas of $r$-parallel sets, we may directly apply Theorem~\ref{thm: klivans} to conclude the following result concerning the computational complexity of learning $r$-parallel sets:

\begin{theorem}\label{thm: klivans_parallel}
Let $r > 0$. Let $\cC_r = \{A_r ~\mid~ A \subset \real^d ~\text{is measurable}\}$. Under the standard Gaussian distribution, the following results hold for learning $\cC_r$ up to an accuracy of $\epsilon$ and confidence of $1-\delta$:
\begin{enumerate}
\item
\textbf{Agnostic learning:} There exists an algorithm that runs in time
\begin{equation*}
\text{poly}\left(d^{O(\max(C^2, C^2/r^2)/\epsilon^4)}, \frac{1}{\epsilon}, \log \frac{1}{\delta} \right)
\end{equation*}
and agnostically learns $\cC$.
\item
\textbf{PAC learning:} There exists an algorithm that runs in time
\begin{equation*}
\text{poly}\left(d^{O(\max(C^2, C^2/r^2)/\epsilon^2)}, \frac{1}{\epsilon}, \log \frac{1}{\delta} \right)
\end{equation*}
and PAC learns $\cC$.
\end{enumerate}
\end{theorem}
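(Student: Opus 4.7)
The plan is essentially a direct composition of Theorem~\ref{thm: gaussian_rip} with Theorem~\ref{thm: klivans}. First, I would apply Theorem~\ref{thm: gaussian_rip} to obtain a uniform upper bound on the upper-Gaussian surface area across the entire class $\cC_r$: every set $A_r \in \cC_r$ satisfies $\bar\gamma(\del A_r) \leq \max(C, C/r)$ with $C = e^{\Theta(d)}$. In the case where $A_r$ stands for $A_{\bp r}$, Theorem~\ref{thm: gaussian_rip} states the bound in terms of $\bar\gamma_C$ rather than $\bar\gamma$; however, the inclusion $B \subseteq C$ implies $\bar\gamma(\del A_{\bp r}) \leq \bar\gamma_C(\del A_{\bp r})$, so the same uniform bound transfers over with no loss, and the argument is uniform across both notations covered by the shorthand $A_r$.

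Next, I would set $S := \max(C, C/r)$ and invoke Theorem~\ref{thm: klivans} with this value. Substituting $S^2 = \max(C^2, C^2/r^2)$ into the exponents of parts (1) and (2) of Theorem~\ref{thm: klivans} yields precisely the agnostic and PAC complexity bounds claimed in Theorem~\ref{thm: klivans_parallel}. Nothing else in the exponent needs to be recomputed; the statement of Theorem~\ref{thm: klivans_parallel} is engineered to absorb the bound $S = \max(C, C/r)$ verbatim.

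The one technical point worth checking, which I view as the main (and only) potential friction point, is that the algorithms of Klivans et al.\ require only an \emph{upper} bound on the Gaussian surface area, since Theorem~\ref{thm: gaussian_rip} controls the upper-Gaussian surface area $\bar\gamma$ rather than a two-sided $\gamma$. Their analysis proceeds by $L^2$-approximating the indicator function of the target set via low-degree Hermite expansions, and the approximation error is controlled by the Gaussian noise sensitivity, which in turn is controlled by upper bounds on the Gaussian surface area (through Ledoux's inequality and its descendants). Hence $\bar\gamma$ is enough in the hypothesis, and this verification is routine rather than substantive. Once this dictionary between the two theorems is in place, the result follows immediately from the two cited theorems, so no further work is needed.
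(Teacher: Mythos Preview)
Your proposal is correct and matches the paper's approach exactly: the paper simply states that Theorem~\ref{thm: gaussian_rip} supplies the uniform surface-area bound $S = \max(C, C/r)$ and then invokes Theorem~\ref{thm: klivans} directly. Your additional remarks about transferring the $\bar\gamma_C$ bound to $\bar\gamma$ via $B \subseteq C$ and about Klivans et al.\ needing only an upper bound on the Gaussian surface area are more careful than the paper itself, which treats the composition as immediate.
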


Klivans et al.\ noted that for two sets $K_1$ and $K_2$, we have the inequality $\gamma(\del(K_1 \cup K_2)) \leq \gamma(\del K_1) + \gamma(\del K_2)$. They also showed that the Gaussian surface areas of Euclidean balls (of any radius) are upper-bounded by a constant. Applying this result to the union of balls, we may derive upper bounds on the computational complexity of learning a union of $O(1)$ ball. However, since parallel sets are the union of (possibly) uncountably many balls, the results from Klivans et al.\ cannot be applied directly. Observe also that as $r$ decreases, the boundaries of sets in $\cC_r$ become more ``wiggly'', and the increased difficulty of learning is reflected in the larger exponent $C^2/r^2$. Lastly, our analysis reveals that $C = e^{\theta(d)}$, so for a fixed $r < 1$, the exponent of $d$ in the learning time bounds is $d^{\exp(\Theta(d))/r^2\epsilon^4}$ for agnostic learning and $d^{\exp(\Theta(d))/r^2\epsilon^2}$ for PAC learning. As noted earlier, it may be possible to derive a stronger upper bound where $C = \Theta(1)$; if so, the corresponding computational complexity bounds would also be strengthened.

\begin{remark}\label{rem: confusion}
One reason to believe $C$ could be made $\Theta(1)$ is as follows. By taking $r \to \infty$ in Theorem~\ref{thm: gaussian_rip}, the upper bound is simply $C$. Intuitively, a parallel set when $r$ is very large resembles a halfspace whose Gaussian surface area is known to be bounded by $\Theta(1)$. A reason to believe $C$ cannot be made $\Theta(1)$ is because it would lead to the surprising result that $1$-parallel sets are essentially as hard to learn as halfspaces. This runs counter to intuition since $1$-parallel sets appear to be far more expressive than halfspaces. 
\end{remark}

\subsection{Sample complexity for estimating robust risk}

Adversarial machine learning has been the focus of much research in the recent past, owing to the observed fragility of deep neural networks under adversarial perturbations. A brief description of the underlying mathematical problem phrased in the language of hypothesis testing is provided below.

\subsubsection{Problem setting and background}

Consider two equally likely hypotheses, denoted by $\{0,1\}$. For $i \in \{0,1\}$, under hypothesis $i$, a sample drawn from distribution $\mu_i$ is observed. To minimize the error probability, it is well known that the optimal decision rule is the maximum likelihood rule and the resulting error (called the Bayes risk) is given by $\frac{1- d_{TV}(\mu_0, \mu_1)}{2}$, where $d_{TV}$ is the total variation distance. Hypothesis testing under adversarial contamination considers an identical setting with one modification: The adversary is allowed to arbitrarily perturb the observed sample within a Euclidean ball of a certain radius, say $r > 0$. The radius $r$ is the adversary's budget. 

Finding the optimal decision region for hypothesis testing with an adversary has been studied recently in Bhagoji, Cullina, and Mittal~\cite{BhaEtal19} and Pydi and Jog~\cite{PydiJog20}. Suppose $A$ is the (measurable) set where hypothesis 1 is declared. Then the robust risk for this decision region is given by
\begin{align*}
\cE(A) &= \frac{\mu_0(A_r) + \mu_1((A^c)_r)}{2}\\
&= \frac{\mu_0(A_r) + 1 -  \mu_1(((A^c)_r))^c}{2}\\
&\stackrel{(a)} = \frac{1}{2} - \frac{\mu_1(A_{-r}) - \mu_0(A_r)}{2},
\end{align*}
where in $(a)$, we use the notation $A_{-r} = ((A^c)_r)^c$. As shown in Pydi and Jog~\cite{PydiJog20}, the optimal robust risk may also be expressed as 
\begin{align*}
\cE^* &= \frac{1}{2} - \sup_A \frac{\mu_1(A_{-r}) - \mu_0(A_r)}{2}\\
&= \frac{1}{2} - \sup_A \frac{\mu_1(A) - \mu_0(A_{2r})}{2}.
\end{align*}
The main result of Bhagoji et al.\ and Pydi and Jog connects the optimal robust risk to an optimal transport cost between the two data distributions. To be precise, the following result was established:

\begin{theorem}[Bhagoji et al.~\cite{BhaEtal19} and Pydi and Jog~\cite{PydiJog20}] \label{thm: muni}
Define the cost function $c_r: \real^d \times \real^d \to \real$ as
\begin{align*}
c(x,y) = \mathbbm 1\{ d(x,y) > 2r \},
\end{align*}
where $d$ is the usual Euclidean distance. Define the optimal transport cost $D_r(\mu_0, \mu_1)$ between two distributions $\mu_0$ and $\mu_1$ on $\real^d$ as
\begin{align*}
D_r(\mu_0, \mu_1) = \inf_{X \sim \mu_0, Y \sim \mu_1} \E c_r(X,Y),
\end{align*}
where the infimum is taken over all couplings of $X$ and $Y$ with marginals $\mu_0$ and $\mu_1$, respectively. Then for an adversarial budget $r$, the optimal robust risk for a binary hypothesis testing problem with equal priors and data distributions $\mu_0$ and $\mu_1$ satisfies the equality
\begin{align*}
\cE^* = \frac{1 - D_r(\mu_0, \mu_1)}{2}.
\end{align*}
\end{theorem}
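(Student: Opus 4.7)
The plan is to invoke the reformulation $\cE^* = \frac{1}{2} - \frac{1}{2}\sup_A [\mu_1(A) - \mu_0(A_{2r})]$ recorded immediately before the theorem, which reduces the claim to the Strassen-type identity
\begin{align*}
\sup_A [\mu_1(A) - \mu_0(A_{2r})] \;=\; D_r(\mu_0, \mu_1),
\end{align*}
where the supremum is over measurable $A \subseteq \real^d$. I would establish this by two matching inequalities: a direct probabilistic bound for $\leq$, and Kantorovich duality applied to the $\{0,1\}$-valued cost $c_r$ for $\geq$.

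For the easy direction $\leq$, fix any coupling $(X,Y) \sim \pi$ with marginals $\mu_0, \mu_1$ and any measurable $A$. Partitioning on the distance event,
\begin{align*}
\mu_1(A) = \Pr(Y \in A,\, d(X,Y) \le 2r) + \Pr(Y \in A,\, d(X,Y) > 2r).
\end{align*}
On the first event, $Y \in A$ together with $d(X,Y) \le 2r$ forces $X \in A_{2r}$, so this term is at most $\mu_0(A_{2r})$; the second is at most $\Pr(d(X,Y) > 2r) = \E c_r(X,Y)$. Rearranging yields $\mu_1(A) - \mu_0(A_{2r}) \le \E c_r(X,Y)$, and taking $\sup_A$ on the left and $\inf_\pi$ on the right gives the bound.

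For the reverse direction $\geq$, I would apply Kantorovich duality for the lower semicontinuous, bounded cost $c_r$ on the Polish space $\real^d$:
\begin{align*}
D_r(\mu_0, \mu_1) \;=\; \sup_{\phi, \psi}\left\{\int \phi\, d\mu_0 + \int \psi\, d\mu_1 \;:\; \phi(x) + \psi(y) \le c_r(x,y)\right\}.
\end{align*}
For each measurable $A$, take the test pair $\phi = -\mathbbm 1_{A_{2r}}$ and $\psi = \mathbbm 1_A$. Dual feasibility is a four-case check: the only case in which $\phi(x) + \psi(y) > 0$ is $y \in A$ and $x \notin A_{2r}$; but then $d(x,A) > 2r$ and thus $d(x,y) > 2r$, so $c_r(x,y) = 1$ and the constraint is met with equality. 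This feasible pair contributes exactly $\mu_1(A) - \mu_0(A_{2r})$ to the dual objective, establishing the inequality.

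The main obstacle is justifying strong Kantorovich duality for the discontinuous cost $c_r$, which jumps on the set $\{d(x,y) = 2r\}$. Since $c_r$ is lower semicontinuous and bounded on a Polish space, strong duality is standard (Villani, Theorem 5.10), and its specialization to $\{0,1\}$-valued costs is exactly Strassen's theorem; if a self-contained argument were desired, one could replace this step with a Hahn-Banach separation on the convex set of couplings, but the bookkeeping is routine and the $c_r \in \{0,1\}$ structure is precisely the case where Strassen's classical result is available off the shelf.
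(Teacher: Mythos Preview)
Your high-level plan matches the paper exactly: the paper does not prove this theorem in the body at all, it simply records that the identity
\[
\sup_A\bigl[\mu_1(A)-\mu_0(A_{2r})\bigr]=D_r(\mu_0,\mu_1)
\]
``follows from Strassen's theorem'' and cites Villani. Your easy direction ($\sup_A\le D_r$) is argued cleanly and is correct.

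However, your argument for the reverse direction has its logic inverted. You invoke Kantorovich duality to write $D_r=\sup_{(\phi,\psi)\ \text{feasible}}\bigl[\int\phi\,d\mu_0+\int\psi\,d\mu_1\bigr]$, and then exhibit the feasible pair $(\phi,\psi)=(-\mathbbm 1_{A_{2r}},\mathbbm 1_A)$ with objective value $\mu_1(A)-\mu_0(A_{2r})$. But exhibiting feasible pairs only gives a \emph{lower} bound on the dual supremum, hence $D_r\ge \mu_1(A)-\mu_0(A_{2r})$ for each $A$, i.e.\ $D_r\ge\sup_A[\cdots]$. That is the easy direction again, not the one you need. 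What is missing, even after granting full Kantorovich duality, is the step showing that the dual supremum over \emph{all} feasible $(\phi,\psi)$ is already attained (or approached) within the indicator family $\{(-\mathbbm 1_{A_{2r}},\mathbbm 1_A):A\text{ measurable}\}$. Concretely, one must show that any feasible $(\phi,\psi)$ can be replaced by such an indicator pair without decreasing $\int\phi\,d\mu_0+\int\psi\,d\mu_1$; this $c$-convexification/rounding step is precisely the content of Strassen's theorem for $\{0,1\}$-valued costs, and it is not supplied by your feasibility check.

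Your final paragraph effectively falls back to citing Strassen off the shelf, which is exactly what the paper does; if that is the intended argument, then your middle paragraph is superfluous and should be replaced either by a direct citation (as in the paper) or by the missing rounding argument above.
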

The above result follows from Strassen's theorem~\cite{Vil03}, which gives the equality
\begin{align*}
\sup_A \frac{\mu_1(A) - \mu_0(A_{2r})}{2} = D_r(\mu_0, \mu_1).
\end{align*}

\subsubsection{Estimating $D_r(\mu_0, \mu_1)$}

Theorem~\ref{thm: muni} is useful because it provides a fundamental lower bound for robust risk that holds for all hypothesis testing rules. One may evaluate a testing rule based on the closeness of its performance to this optimal value. However, this is not possible in practice, since the data distributions $\mu_0$ and $\mu_1$ are unknown; one only has access to the ``empirical distribution'' derived from a data set composed of independent draws from the data distribution. 

In Bhagoji et al.\ and Pydi and Jog, the authors calculate $D_r(\cdot, \cdot)$ between the empirical distributions (based on a finite data set), instead. However, neither work addresses the proximity of the empirically calculated $D_r$ to the true $D_r$. Indeed, it is not even clear if $D_r$ calculated from the empirical distribution is a consistent estimator of the true $D_r$. 

Some intuition about $D_r$ can be obtained by observing that when $r=0$, it equals the total variation distance. As noted in Pydi and Jog, however, $D_r$ is neither a metric nor a pseudo-metric on the space of probability distributions. Moreover, estimating the total variation distance between $\mu_0$ and $\mu_1$ by calculating the total variation distance between the the empirical distributions is bound to fail, since the latter will always yield a value of 1 for continuous $\mu_0$ and $\mu_1$. Interestingly, this is not the case for $D_r$ when $r > 0$. 

In what follows, we show that under suitable smoothness conditions, the plug-in estimator is a consistent estimator of the true $D_r$. We also provide bounds on the number of samples necessary to approximate $D_r(\mu_0, \mu_1)$ up to an error of $\epsilon$ with a probability of $1-\delta$. The main technical ingredient is the reverse Gaussian isoperimetric inequality from Theorem~\ref{thm: gaussian_rip}.

\subsubsection{Sample complexity bounds for estimating $D_r(\mu_0, \mu_1)$}
We make the following assumptions on $\mu_0$ and $\mu_1$:
\begin{itemize}
\item[(A1)] Each $\mu_i$ is a Gaussian-smoothed version of some $\tilde \mu_i$; i.e., $\mu_i = \tilde \mu_i \star \cN(0, \sigma^2 I_d)$.
\item[(A2)] Each $\tilde \mu_i$ has bounded support on $\real^d$.
\end{itemize}
Assumption $(a)$ is easy to satisfy in practice by simply adding Gaussian noise to the observed data samples. Assumption $(b)$ makes our analysis simpler, but we note that it can be considerably relaxed. We use the notation $\mu_i^n$ for the empirical distribution with $n$ samples, and use the random variables $X_0 \sim \mu_0$, $X_1 \sim \mu_1$, $X_0^n \sim \mu_0^n$, and $X_1^n \sim \mu_1^n$. If $X \sim \mu_X$ and $Y \sim \mu_Y$, we shall use the notation $D_r(\mu_X, \mu_Y)$ and $D_r(X, Y)$ interchangeably. We denote $C(\sigma, r) \defn \max \left(\frac{C}{\sigma}, \frac{C}{r}\right)$, where $C$ is as in Theorem~\ref{thm: gaussian_rip}. 

\begin{lemma}\label{lemma: adv1}[Corollary 3.1 from Pydi and Jog~\cite{PydiJog20}] 
The following inequality holds: 
\begin{align*}
D_r(\mu_0, \mu_1) \leq \frac{W_1(\mu_0, \mu_1)}{2r},
\end{align*}
where $W_1$ is the 1-Wasserstein distance.
\end{lemma}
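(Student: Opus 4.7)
The plan is to establish the inequality by comparing the two cost functions pointwise. Recall that $D_r(\mu_0, \mu_1)$ is the infimum of $\E[c_r(X,Y)] = \Pr(d(X,Y) > 2r)$ over couplings $(X,Y)$ with marginals $\mu_0$ and $\mu_1$, while $W_1(\mu_0, \mu_1)$ is the infimum of $\E[d(X,Y)]$ over the same set of couplings. Since the two optimization problems are over the same feasible set (the set of couplings), it suffices to dominate the integrand of the first by a multiple of the integrand of the second.

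The key pointwise observation is that for all $x, y \in \real^d$,
\begin{align*}
\mathbbm 1\{ d(x,y) > 2r \} \leq \frac{d(x,y)}{2r}.
\end{align*}
This is clear by case analysis: when $d(x,y) \leq 2r$ the left side is $0$ and the right side is nonnegative, and when $d(x,y) > 2r$ the right side strictly exceeds $1 = $ left side. This is the one substantive step, though it is really just a Markov-type inequality, so there is no real obstacle.

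Taking expectations under any coupling $(X,Y)$ with $X \sim \mu_0$ and $Y \sim \mu_1$ preserves the inequality, giving $\E[c_r(X,Y)] \leq \E[d(X,Y)]/(2r)$. Taking the infimum over all such couplings on both sides (and noting that $1/(2r)$ is a positive constant that factors out of the infimum) yields exactly
\begin{align*}
D_r(\mu_0, \mu_1) \leq \frac{W_1(\mu_0, \mu_1)}{2r},
\end{align*}
which is the claimed bound. No appeal to the reverse isoperimetric machinery developed earlier in the paper is needed here; this lemma is purely a comparison of transport costs and serves as a preparatory tool for the subsequent sample complexity arguments.
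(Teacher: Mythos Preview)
Your argument is correct and is exactly the approach the paper indicates: the paper simply notes that the lemma follows from Markov's inequality applied to $D_r(X,Y) = \inf_{\Pi(X,Y)} \prob(d(X,Y) > 2r)$, and your pointwise bound $\mathbbm 1\{d(x,y) > 2r\} \leq d(x,y)/(2r)$ is precisely Markov's inequality written out. The only cosmetic remark is that ``taking the infimum on both sides'' is more cleanly phrased as: for every coupling $\Pi$, $D_r(\mu_0,\mu_1) \leq \E_\Pi[c_r(X,Y)] \leq \E_\Pi[d(X,Y)]/(2r)$, and then minimizing the right side over $\Pi$ gives the result.
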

Lemma~\ref{lemma: adv1} is a straightforward consequence of applying Markov's inequality to the equality $D_r(X, Y) = \inf_{\Pi(X,Y)} \prob(d(X,Y) > 2r)$, where $\Pi(X,Y)$ is the set of couplings of $X$ and $Y$.

\begin{lemma}\label{lemma:  adv2}
Let $\eta \in (0,r/3)$. The following inequalities hold:
\begin{align*}
D_{r+2\eta}(X_0, X_1) &\leq D_{r}(X_0^n, X_1^n) + D_\eta(X_0, X_0^n) + D_\eta(X_1, X_1^n), \quad \text{ and }\\
D_{r-2\eta}(X_0, X_1)  &\geq D_{r}(X_0^n, X_1^n) - D_\eta(X_0, X_0^n) - D_\eta(X_1, X_1^n).
\end{align*}
\end{lemma}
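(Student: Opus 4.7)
The plan is to prove both inequalities by a triangle-style argument on the cost function $c_r(x,y)=\mathbbm 1\{d(x,y)>2r\}$, exploiting the fact that $D_r(X,Y)=\inf_{\Pi(X,Y)}\prob(d(X,Y)>2r)$. The key tool is the gluing lemma from optimal transport: given three couplings $\pi_{01}$ of $(X_0,X_0^n)$, $\pi_{12}$ of $(X_0^n,X_1^n)$, and $\pi_{23}$ of $(X_1^n,X_1)$, we can construct a joint coupling $\pi$ of $(X_0,X_0^n,X_1^n,X_1)$ whose pairwise marginals agree with the given couplings. I would apply this to the \emph{optimal} couplings for $D_\eta(X_0,X_0^n)$, $D_r(X_0^n,X_1^n)$, and $D_\eta(X_1^n,X_1)$.

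For the first inequality, I would use the Euclidean triangle inequality
\[
d(X_0,X_1)\le d(X_0,X_0^n)+d(X_0^n,X_1^n)+d(X_1^n,X_1).
\]
Under the glued coupling, if $d(X_0,X_1)>2(r+2\eta)$ then at least one of the three summands must exceed its share ($2\eta$, $2r$, $2\eta$ respectively). A union bound then yields
\[
\prob\bigl(d(X_0,X_1)>2(r+2\eta)\bigr)\le D_\eta(X_0,X_0^n)+D_r(X_0^n,X_1^n)+D_\eta(X_1,X_1^n),
\]
and the left-hand side is an upper bound on $D_{r+2\eta}(X_0,X_1)$ since the marginals of $(X_0,X_1)$ under the glued coupling are $\mu_0$ and $\mu_1$.

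For the second inequality, I would instead glue optimal couplings realizing $D_\eta(X_0,X_0^n)$, $D_{r-2\eta}(X_0,X_1)$, and $D_\eta(X_1,X_1^n)$ to produce a joint law on $(X_0^n,X_0,X_1,X_1^n)$, and apply the triangle inequality in the reverse direction
\[
d(X_0^n,X_1^n)\le d(X_0^n,X_0)+d(X_0,X_1)+d(X_1,X_1^n).
\]
Again by the union bound, the event $\{d(X_0^n,X_1^n)>2r\}$ is contained in the union of $\{d(X_0^n,X_0)>2\eta\}$, $\{d(X_0,X_1)>2(r-2\eta)\}$, and $\{d(X_1,X_1^n)>2\eta\}$, giving the bound $D_r(X_0^n,X_1^n)\le D_\eta(X_0,X_0^n)+D_{r-2\eta}(X_0,X_1)+D_\eta(X_1,X_1^n)$, which rearranges to the stated inequality. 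The condition $\eta<r/3$ simply ensures $r-2\eta>0$ so that $D_{r-2\eta}$ is meaningful.

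The main subtlety is the legitimacy of the gluing construction when one of the marginals (e.g.\ $\mu_0^n$) is random rather than fixed; conditionally on the sample, however, $\mu_0^n$ and $\mu_1^n$ are deterministic measures, so the standard gluing lemma applies pointwise in the sample and the inequalities hold almost surely. Beyond this, the argument is essentially a ``triangle inequality for $D_r$'' of the form $D_{r+s}(\mu,\nu)\le D_r(\mu,\rho)+D_s(\rho,\nu)$ iterated twice, which is well known in the Strassen/optimal-transport framework and requires no probabilistic estimates specific to the Gaussian-smoothed setting.
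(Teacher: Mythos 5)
Your argument is correct and is essentially identical to the paper's: the paper's ``Markov chain $X_0 \to X_0^n \to X_1^n \to X_1$'' construction is exactly the gluing-lemma coupling you describe, and the two triangle-inequality/union-bound steps match the paper's verbatim, with the same choice of couplings and the same partition of the threshold $2(r+2\eta)$ (respectively $2r$) into shares $2\eta + 2r + 2\eta$ (respectively $2\eta + 2(r-2\eta) + 2\eta$). Your additional remark about conditioning on the sample to make $\mu_0^n,\mu_1^n$ deterministic is a careful point the paper leaves implicit, but it does not change the substance of the proof.
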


\begin{proof}
Consider a coupling of $(X_0, X_1, X_0^n, X_1^n)$ such that the Markov chain $X_0 \to X_0^n \to X_1^n \to X_1$ holds. The joint distributions of adjacent links in the chain are as follows:  $(X_0, X_0^n) \sim \pi_0$, which is optimal for $D_\eta$; $(X_1, X_1^n) \sim \pi_1$, which is optimal for $D_\eta$; and $(X_0^n, X_1^n) \sim \pi_{01}^n$, which is optimal for $D_{r}$. The Markov chain induces a coupling on $(X_0, X_1)$ that is not necessarily optimal for the $D_{r+2\eta}$ cost. This means that
\begin{align*}
D_{r+2\eta}(X_0, X_1) &\leq \prob(d(X_0,X_1) > 2r + 4\eta)\\
&\leq \prob(d(X_0, X_0^n) > 2\eta) + \prob(d(X_1, X_1^n) > 2\eta) + \prob(d(X_0^n, X_1^n) > 2r)\\
&= D_\eta(X_0, X_0^n) + D_\eta(X_1, X_1^n) + D_{r}(X_0^n, X_1^n).
\end{align*}

To obtain the second inequality, consider a different coupling between $(X_0, X_1, X_0^n, X_1^n)$ such that the Markov chain $X_0^n \to X_0 \to X_1 \to X_1^n$ holds. The joint distributions of adjacent links in the chain are as follows:  $(X_0, X_0^n) \sim \pi_0$, which is optimal for $D_\eta$; $(X_1, X_1^n) \sim \pi_1$, which is optimal for $D_\eta$; and $(X_0, X_1) \sim \pi_{01}$, which is optimal for $D_{r-2\eta}$. The Markov chain induces a coupling on $(X_0^n, X_1^n)$ that is not necessarily optimal for the $D_{r}$ cost. This means that
\begin{align*}
D_{r}(X_0^n, X_1^n) &\leq \prob(d(X_0^n,X_1^n) > 2r)\\
&\leq \prob(d(X_0, X_0^n) > 2\eta) + \prob(d(X_1, X_1^n) > 2\eta) + \prob(d(X_0, X_1) > 2r-4\eta)\\
&= D_\eta(X_0, X_0^n) + D_\eta(X_1, X_1^n) + D_{r-2\eta}(X_0, X_1).
\end{align*}
\end{proof}

\begin{lemma}\label{lemma: adv3}
Let $0 < r_1 < r_2$. Then the following inequality holds:
\begin{align*}
0 \leq D_{r_1}(\mu_0, \mu_1) - D_{r_2}(\mu_0, \mu_1) \leq 2C(\sigma, 2r_1)(r_2-r_1).
\end{align*}
\end{lemma}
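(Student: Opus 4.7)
The plan is to prove the two inequalities separately. The lower bound $0 \leq D_{r_1}(\mu_0,\mu_1) - D_{r_2}(\mu_0,\mu_1)$ is immediate from pointwise monotonicity of the cost: since $r_1 < r_2$, the indicator $c_{r_1}(x,y) = \mathbbm{1}\{d(x,y) > 2r_1\}$ dominates $c_{r_2}(x,y)$, so $\mathbb{E}_\pi c_{r_1} \geq \mathbb{E}_\pi c_{r_2}$ for every coupling $\pi$, and taking the infimum over couplings preserves the inequality.

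For the upper bound I would invoke the Strassen-type duality already used in Theorem~\ref{thm: muni}, namely $D_r(\mu_0,\mu_1) = \sup_A [\mu_1(A) - \mu_0(A_{2r})]$. Given $\epsilon > 0$, fix a measurable set $A$ with $\mu_1(A) - \mu_0(A_{2r_1}) \geq D_{r_1}(\mu_0,\mu_1) - \epsilon$. The same $A$ is a feasible candidate in the $D_{r_2}$ supremum, hence $D_{r_2}(\mu_0,\mu_1) \geq \mu_1(A) - \mu_0(A_{2r_2})$. Subtracting yields
\begin{equation*}
D_{r_1}(\mu_0,\mu_1) - D_{r_2}(\mu_0,\mu_1) \leq \mu_0(A_{2r_2}) - \mu_0(A_{2r_1}) + \epsilon,
\end{equation*}
which reduces the problem to bounding $\mu_0(A_{2r_2}) - \mu_0(A_{2r_1})$ uniformly over all measurable $A$.

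To that end, I would interpret this mass difference by a one-dimensional sweep along the parallel-set family $\{A_{\oplus s}\}_{s \in [2r_1,\,2r_2]}$. Each $A_{\oplus s}$ is an $s$-parallel set, so Theorem~\ref{thm: gaussian_rip} together with Remark~\ref{rem: mixture} (which extends the Gaussian surface-area bound to $\mu_0 = \tilde\mu_0 \star \cN(0,\sigma^2 I_d)$) gives that the upper $\mu_0$-surface area of $A_{\oplus s}$ is at most $\max(C/\sigma,\,C/s) = C(\sigma,s) \leq C(\sigma, 2r_1)$, using that $C(\sigma,\cdot)$ is non-increasing. The function $g(s) := \mu_0(A_{\oplus s})$ is non-decreasing and continuous in $s$ (Stacho's continuity of the Lebesgue volume function $s \mapsto \lambda(A_{\oplus s})$ transfers to $\mu_0$ via dominated convergence, since $\mu_0$ has a bounded density), and by construction its upper right Dini derivative at each $s$ equals the upper $\mu_0$-surface area of $\partial A_{\oplus s}$, and hence is at most $C(\sigma, 2r_1)$ on the whole interval.

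The final step is the standard real-analysis lemma that a continuous, non-decreasing function whose upper right Dini derivative is bounded by $M$ on $[a,b]$ satisfies $g(b) - g(a) \leq M(b-a)$. Applied here this yields $g(2r_2) - g(2r_1) \leq 2(r_2 - r_1)\,C(\sigma, 2r_1)$, and sending $\epsilon \to 0$ closes the argument. The main technical obstacle is this integration step, since Theorem~\ref{thm: gaussian_rip} delivers only an upper-Dini-derivative estimate rather than an almost-everywhere derivative; a self-contained alternative, should one want to avoid the Dini lemma, would be to directly integrate the explicit finite-shell volume estimate that appears inside the proof of Theorem~\ref{thm: gaussian_rip}, but the Dini route is cleaner and slightly tighter.
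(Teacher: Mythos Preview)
Your proposal is correct and follows essentially the same route as the paper: establish monotonicity directly, pass to the dual $\sup_A[\mu_1(A)-\mu_0(A_{2r})]$, pick a (near-)optimal $A$ for $D_{r_1}$, and bound the resulting shell mass $\mu(A_{2r_2}\setminus A_{2r_1})$ via the Gaussian surface-area bound for parallel sets from Theorem~\ref{thm: gaussian_rip} and Remark~\ref{rem: mixture}. The paper simply writes $\mu_1(A^*_{2r_2}\setminus A^*_{2r_1})=\int_0^{2(r_2-r_1)}\mu_1(\partial A^*_{2r_1+\tau})\,d\tau$ and bounds the integrand; your Dini-derivative formulation of this same step is a slightly more careful way of making the integration rigorous, but it is not a different idea.
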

\begin{proof}
By the definition of $D_r$, it follows immediately that $D_{r_1} (\mu_0, \mu_1) \geq D_{r_2}(\mu_0, \mu_1)$. Let $A^*$ be the set that achieves the equality \footnote{If $A^*$ does not exist, the proof goes through by considering a sequence of sets $(A^*)^n$ such that  $\mu_0((A^*)^n) - \mu_1((A^*)^n_{2r_1}) \stackrel{n \to \infty} \to \sup \mu_0(A) - \mu_1(A_{2r_1})$.}
$$D_{r_1}(\mu_0, \mu_1) = \mu_0(A^*) - \mu_1(A^*_{2r_1}).$$
We have the sequence of inequalities
\begin{align*}
D_{r_2}(\mu_0, \mu_1) &= \sup_A \mu_0(A) - \mu_1(A_{2r_2})\\
&\geq \mu_0(A^*) - \mu_1(A^*_{2r_2})\\
&= \mu_0(A^*) - \mu_1(A^*_{2r_1}) - \mu_1(A^*_{2r_2} \setminus A^*_{2r_1})\\
&= D_{r_1}(\mu_0, \mu_1) - \mu_1(A^*_{2r_2} \setminus A^*_{2r_1}).
\end{align*}
We also have the equality
\begin{align*}
\mu_1(A^*_{2r_2} \setminus A^*_{2r_1}) &= \int_{\tau = 0}^{2(r_2-r_1)} \mu_1(\del A^*_{2r_1+\tau}) d\tau\\
&\stackrel{(a)}\leq 2C(\sigma, 2r_1)(r_2-r_1).
\end{align*}
Here, $(a)$ follows from Remarks~\ref{rem: sigma} and \ref{rem: mixture}.
\end{proof}

\begin{theorem}\label{thm: estimate_dr}
Let $\mu_0$ and $\mu_1$ satisfy the assumptions (A1) and (A2). Also assume that $d \geq 3$. Let $r, \epsilon > 0$ and $\delta \in (0,1)$. Then for $n \geq N_0 = \Theta \left( \frac{C(\sigma, 2r/3)^d \log(1/\delta) } { \epsilon^{2d}}\right)$, the following inequality holds with probability at least $1-\delta$:
\begin{align*}
\abs{D_r(\mu_0, \mu_1) - D_r(\mu_0^n, \mu_1^n)} \leq \epsilon.
\end{align*}
\end{theorem}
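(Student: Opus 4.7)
The plan is to reduce the problem to controlling the two empirical $1$-Wasserstein distances $W_1(\mu_i, \mu_i^n)$ and then to invoke a high-probability convergence rate for these. Fix a slack parameter $\eta \in (0, r/6)$ to be chosen at the end. First I would apply Lemma~\ref{lemma:  adv2} to sandwich $D_r(X_0^n, X_1^n)$ between $D_{r \pm 2\eta}(X_0, X_1)$ up to an additive error $\Delta_n \defn D_\eta(X_0,X_0^n) + D_\eta(X_1,X_1^n)$, and then use Lemma~\ref{lemma: adv3} to absorb the $\pm 2\eta$ shift in $r$ at a cost of $4\eta\,C(\sigma, 2(r-2\eta))$. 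The choice $\eta \le r/6$ forces $2(r-2\eta) \ge 2r/3$, and since $C(\sigma, \cdot)$ is decreasing this stability constant is dominated by $C(\sigma, 2r/3)$. Finally, Lemma~\ref{lemma: adv1} gives $D_\eta(X_i, X_i^n) \le W_1(\mu_i, \mu_i^n)/(2\eta)$. Chaining these yields the master inequality
\begin{align*}
\bigl|D_r(\mu_0,\mu_1) - D_r(\mu_0^n,\mu_1^n)\bigr| \;\le\; 4\eta\,C(\sigma, 2r/3) \;+\; \frac{W_1(\mu_0,\mu_0^n) + W_1(\mu_1,\mu_1^n)}{2\eta}.
\end{align*}

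Next I would balance these two terms. Setting $\eta = c\epsilon/C(\sigma, 2r/3)$ for a small absolute constant $c$ makes the first term at most $\epsilon/2$; the side constraint $\eta \le r/6$ holds automatically once $\epsilon$ is small relative to $r\,C(\sigma, 2r/3)$, which is guaranteed by the size of $N_0$. It then suffices to prove that, with probability at least $1 - \delta$,
\begin{align*}
W_1(\mu_i, \mu_i^n) \;\le\; c'\,\epsilon^2 / C(\sigma, 2r/3) \qquad \text{for both } i \in \{0, 1\},
\end{align*}
at which point the master inequality yields $|D_r(\mu_0, \mu_1) - D_r(\mu_0^n, \mu_1^n)| \le \epsilon$.

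The remaining ingredient is a high-probability rate for the empirical Wasserstein distance. By Assumption (A2) each $\tilde\mu_i$ is supported in a ball of fixed radius $R$, and a Gaussian tail bound confines all $n$ samples from $\mu_i = \tilde\mu_i \star \cN(0, \sigma^2 I_d)$ to a ball of effective radius $R_n \defn R + \sigma\sqrt{2d\,\log(8n/\delta)}$ except on an event of probability $\delta/8$. On this event I would invoke the Fournier--Guillin / Weed--Bach bound $\E[W_1(\mu_i, \mu_i^n)] \lesssim n^{-1/d}$ valid for $d \ge 3$, paired with McDiarmid's inequality (using bounded differences of order $R_n/n$) to get $W_1(\mu_i, \mu_i^n) \lesssim n^{-1/d} + R_n\sqrt{\log(1/\delta)/n}$ with probability $1-\delta/4$. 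For $d \ge 3$ the $n^{-1/d}$ term dominates, and requiring $n^{-1/d} \lesssim \epsilon^2/C(\sigma, 2r/3)$ produces exactly the claimed threshold $n \ge \Theta\!\bigl(C(\sigma, 2r/3)^d \log(1/\delta)/\epsilon^{2d}\bigr)$ after a union bound over $i \in \{0, 1\}$.

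The hard part will be this last step: transferring the textbook empirical Wasserstein rate $n^{-1/d}$ from compactly supported measures to $\mu_i$, which sits on all of $\real^d$. Assumption (A2) reduces the matter to a Gaussian tail truncation followed by concentration, but one must track constants carefully so that the $\log(n/\delta)$ appearing in the effective diameter $R_n$ does not degrade the clean $\log(1/\delta)$ factor in the statement; a self-bounding argument or a chaining-based concentration inequality is the cleanest fix. Everything else is algebraic bookkeeping on top of Lemmas~\ref{lemma: adv1}--\ref{lemma: adv3}, whose combined effect was precisely designed to produce this reduction.
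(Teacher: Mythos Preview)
Your proposal is correct and follows essentially the same route as the paper: both arguments chain Lemmas~\ref{lemma: adv1}--\ref{lemma: adv3} to obtain the master inequality
\[
\bigl|D_r(\mu_0,\mu_1) - D_r(\mu_0^n,\mu_1^n)\bigr| \;\le\; 4\eta\,C(\sigma, 2r/3) \;+\; \frac{W_1(\mu_0,\mu_0^n)+W_1(\mu_1,\mu_1^n)}{2\eta},
\]
then choose $\eta \asymp \epsilon/C(\sigma,2r/3)$ and demand $W_1(\mu_i,\mu_i^n) \lesssim \epsilon^2/C(\sigma,2r/3)$. The only divergence is in the last step: the paper simply invokes the Fournier--Guillin tail bound $\prob(W_1(\mu_i,\mu_i^n)>x)\le c_0 e^{-c_1 n x^d}$ directly (which already accommodates the sub-Gaussian tails of $\mu_i$ under (A1)--(A2)), whereas you propose a more hands-on truncation plus McDiarmid argument; the direct citation avoids the bookkeeping you flag as ``the hard part.''
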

\begin{proof}
Let $\eta \in (0, r/3)$. Let $N_0$ be such that for $n \geq N_0$, the following holds with probability $1 - \delta/2$, for $i \in \{0, 1\}$:
\begin{align}\label{eq: w1_bound}
W_1(X_i, X_i^n) \leq 2\eta \cdot \frac{\epsilon}{4}.
\end{align}
Numerous results exist concerning the convergence of the empirical measure in terms of the Wasserstein metric; we use here a result from Fournier and Guillin \cite[Theorem 2]{FouGui15}, which states that for all $N \geq 1$ and all small enough $x$, 
\begin{align*}
\prob(W_1(\mu_i, \mu_i^N) > x) \leq c_0 e^{-c_1 N x^{d}},
\end{align*}
where $c_0$ and $c_1$ are constants that depend on $d$. Substituting $x$ to be $\eta\epsilon/2$ and
\begin{align*}
N  \geq \frac{\log (2/\delta)+ \log c_0}{c_1 (\eta\epsilon/2)^d} =: N_0,
\end{align*}
inequality~\eqref{eq: w1_bound} is satisfied with probability $1-\delta/2$.
When $N \geq N_0$, Lemma~\ref{lemma: adv2} implies that the following bound holds with probability $1-\delta$:
\begin{align*}
D_{r+2\eta}(X_0, X_1) &\leq D_{r}(X_0^n, X_1^n) + D_\eta(X_0, X_0^n) + D_\eta(X_1, X_1^n)\\
&\leq D_{r}(X_0^n, X_1^n) + \frac{W_1(X_0,X_0^n)}{2\eta} + \frac{W_1(X_1,X_1^n)}{2\eta}\\
&\leq D_{r}(X_0^n, X_1^n) + \frac{\epsilon}{2}.
\end{align*}
Lemma~\ref{lemma: adv3} gives the inequality 
\begin{align*}
D_{r}(X_0, X_1) &\leq D_{r+2\eta}(X_0, X_1) + 4\eta C(\sigma, 2r),  
\end{align*}
implying that
\begin{align}\label{eq: dr_end1}
D_{r}(X_0, X_1) \leq D_{r}(X_0^n, X_1^n) + \frac{\epsilon}{2} + 4\eta C(\sigma, 2r).
\end{align}
Similarly, with probability $1-\delta$, 
\begin{align*}
D_{r-2\eta}(X_0, X_1)  &\geq D_{r}(X_0^n, X_1^n) - D_\eta(X_0, X_0^n) - D_\eta(X_1, X_1^n)\\
&\geq D_{r}(X_0^n, X_1^n) - \frac{\epsilon}{2}.
\end{align*}
Lemma~\ref{lemma: adv3} gives the inequality 
\begin{align*}
D_{r}(X_0, X_1) &\geq D_{r-2\eta}(X_0, X_1) - 4\eta C(\sigma, 2(r-2\eta)),  
\end{align*}
implying
\begin{align}\label{eq: dr_end2}
D_{r}(X_0, X_1) \geq D_{r}(X_0^n, X_1^n) - \frac{\epsilon}{2} - 4\eta C(\sigma, 2(r-2\eta)).
\end{align}
Combining inequalities~\eqref{eq: dr_end1} and~\eqref{eq: dr_end2}, we see that with probability $1-\delta$,
\begin{align*}
\abs{D_{r}(X_0, X_1) - D_{r}(X_0^n, X_1^n) } \leq \frac{\epsilon}{2} + 4\eta C(\sigma, 2(r-2\eta)) \leq \frac{\epsilon}{2} + 4\eta C(\sigma, 2r/3).
\end{align*}
Now pick $\eta = \frac{\epsilon}{2C(\sigma, 2r/3)}$ to conclude that
\begin{align*}
\abs{D_{r}(X_0, X_1) - D_{r}(X_0^n, X_1^n) } \leq \epsilon.
\end{align*}
Note that 
\begin{align*}
N_0 &= \frac{\log (2/\delta)+ \log c_0}{c_1 (\eta\epsilon/2)^d}\\
&= \Theta \left( \frac{C(\sigma, 2r/3)^d \log(1/\delta) } { \epsilon^{2d}}\right).
\end{align*}
This concludes the proof.
\end{proof}
\begin{remark}
Observe that smaller values of $r$ correspond to a larger sample size requirement. Also, the smaller the variance $\sigma$ used for smoothing the distributions, the more samples are required. Both observations align with intuition.
\end{remark}

\section{Conclusion and open problems}\label{sec: end}

Convolving with a small Gaussian noise is a common technique used in analysis to smooth probability  distributions. The natural counterpart to such a procedure in geometry is to take the parallel set of any measurable set. It is intuitive that parallel sets are ``more smooth,'' since they cannot have arbitrarily wiggly boundaries. In this paper, we showed that bounded parallel sets in $\real^d$ have bounded Euclidean surface areas and arbitrary parallel sets have bounded Gaussian surface areas. We showed that our reverse isoperimetric inequalities have applications in machine learning. We also established some reverse Brunn-Minkowski and entropy power inequalities that may be of independent interest. We mention a few open problems that are worth exploring.
\begin{openproblem}
The dependence of $C$ on the dimension $d$ in Theorem~\ref{thm: gaussian_rip} is $e^{\Theta(d)}$. Is this dependence tight? If not, what is the right dependence?
\end{openproblem}
An interesting but challenging problem is identifying the Gaussian surface area of an optimally dense packing of unit $\ell_2$- or $\ell_\infty$-balls in $\real^d$, and use this to get a lower bound for the dimension dependence of $C$. 

Our analysis relied heavily on geometric properties of balls and cubes and is therefore restricted to $r$-parallel sets for balls and cubes. For a set $K \in \cK$, if one is able to establish the analogue of Proposition~\ref{prop: puzzle_d_dim} or~\ref{prop: boxes}, then it will be possible to derive Theorems~\ref{thm: lebesgue_rip_volume} and~\ref{thm: gaussian_rip} for sets of the form $A \oplus rK$. We propose the following open problem:
\begin{openproblem}
Let $K \in \cK$. Let $K(x_0;r)$ denote the set $K$ scaled by $r$ and translated so that its center of mass is at $x_0$. Let $A = \{x_i ~|~ i \in [N], x_i \in K(x_0; r)\} \cup \{x_0\}$. Then is it true that $\lambda_K(\del (A \oplus rK)) \leq \lambda_K(\del K(x_0; 2r))$?
\end{openproblem}

One may also consider arbitrary bounded convex sets in addition to sets in $\cK$. A possible approach would be to establish analogs of Theorems~\ref{thm: lebesgue_rip_volume} and \ref{thm: gaussian_rip} for Minkowski smoothing by convex polytopes, and taking the limit to generalize these results for arbitrary convex sets. 

Our final open problem concerns the tightness of the bound in Theorem~\ref{thm: rev_epi}:
\begin{openproblem}
Does the following inequality hold? 
$$h(X_r + Y_r) \leq h(X_r) + h(Y_r) - \frac{d}{2} \log (\pi e r).$$
\end{openproblem}
The reverse entropy power inequality in Section~\ref{sec: rev_bmi_epi} was established for Gaussian-smoothed random variables. It is likely that similar inequalities exist for alternate smoothing procedures, such as exponential smoothing. We leave these problems for future work


\section*{Acknowledgements}

The author is grateful to the National Science Foundation for funding his research through the grants CCF-1907786 and CCF-1942134, and to Ankit Pensia and Muni Sreenivas Pydi for helpful discussions. We also thank Kostiantyn Drach for pointing out the observation in Remark~\ref{rem: kostya}. The figures in Section~\ref{sec: puzzle} were created using GeoGebra, a free online tool for geometry.
  
\bibliographystyle{unsrt}
\bibliography{ref.bib}


\appendix

\end{document}